\documentclass{amsproc}

\usepackage[a4paper,top=31mm,bottom=31mm,left=30mm,right=30mm]{geometry}
\usepackage{amsmath,amssymb,amsthm,mathtools}
\usepackage{microtype}
\usepackage{enumitem,placeins}
\usepackage{hyperref}

\newtheorem{theorem}{Theorem}
\newtheorem{lemma}{Lemma}
\newtheorem{corollary}{Corollary}
\newtheorem{proposition}{Proposition}
\theoremstyle{remark}
\newtheorem{remark}{Remark}

\DeclareMathOperator{\JS}{JS}
\DeclareMathOperator{\Jc}{Jc}

\newcommand{\rz}{{\rm RZ}}

\newcommand{\lrf}[1]{\lfloor #1\rfloor}

\newcommand{\seps}{\prec}

\begin{document}

\title[Jacobi--Stirling numbers and Jacobi--Stirling descent polynomials]
{Binomial expansions of Jacobi–Stirling numbers and real-rootedness of Jacobi--Stirling descent polynomials}
\author{Shi-Mei Ma}
\address{School of Mathematics and Statistics, Shandong University of Technology, Zibo, Shandong 255000, P.R. China}
\email{shimeimapapers@163.com (S.-M. Ma)}
\author{Ming-Xin Wang}
\email{mingxinwang2026@163.com (M.-X. Wang)}
\address{School of Mathematics and Statistics, Shandong University of Technology, Zibo, Shandong 255000, P.R. China}

\subjclass[2010]{Primary 05A18; Secondary 26C10}

\keywords{Jacobi--Stirling numbers; Binomial expansions; Real-rootedness; Top-heaviness}

\date{}

\dedicatory{}

\begin{abstract}
In this paper, we first expand fixed diagonals of the Jacobi--Stirling numbers
in a binomial basis. For the second kind, the expansion coefficients
are polynomials in $z+1$ with nonnegative integer coefficients.
We give a recurrence and a signed-partition interpretation
for these coefficients. The same holds for the differences between corresponding
unsigned first-kind and second-kind coefficients. We then prove that every nonzero nonnegative linear combination
of the descent polynomials over Jacobi--Stirling permutations with a fixed number
of deleted barred letters has only simple negative zeros
when that number is one, two, or three. The same holds when exactly one or two barred letters are retained. 
Thus we verify five infinite families in a conjecture of
Gessel, Lin and Zeng. Finally, using insertion operators, we find that 
every descent polynomial over Jacobi--Stirling permutations with a fixed number of deleted barred letters is top heavy and has
an increasing left half.
\end{abstract}
\maketitle

%%%%%%%%%%%%%%%%%%%%%%%%%%%%%%%%%%%%%%%%%%%%
\section{Introduction}
%%%%%%%%%%%%%%%%%%%%%%%%%%%%%%%%%%%%%%%%%%%%
The unsigned first kind and second kind Jacobi--Stirling numbers are defined by
\begin{equation*}
\begin{aligned}
&\prod_{i=0}^{n-1}\bigl(x-i(i+z)\bigr)=\sum_{k=0}^{n}(-1)^{n+k}\mathrm{Jc}(n,k;z)x^k,\\
&x^n=\sum_{k=0}^{n}\mathrm{JS}(n,k;z)
  \prod_{i=0}^{k-1}\bigl(x-i(i+z)\bigr),
\end{aligned}
\end{equation*}
with the empty product equal to $1$,
see~\cite[p.~21, (3.4)]{LinZeng} for instance.
Equivalently, they satisfy the recurrence relations
\begin{equation}\label{recu}
\begin{aligned}
&\mathrm{Jc}(n,k;z)
=\mathrm{Jc}(n-1,k-1;z)
  +(n-1)(n-1+z)\mathrm{Jc}(n-1,k;z),\\
&\mathrm{JS}(n,k;z)
=\mathrm{JS}(n-1,k-1;z)
  +k(k+z)\mathrm{JS}(n-1,k;z),
\end{aligned}
\end{equation}
for $n,k\ge1$, with the initial conditions
$\mathrm{JS}(0,0;z)=\mathrm{Jc}(0,0;z)=1$
and $\mathrm{JS}(n,0;z)=\mathrm{JS}(0,n;z)
=\mathrm{Jc}(n,0;z)=\mathrm{Jc}(0,n;z)=0$ for $n\geq 1$.
We set $\JS(n,k;z)=\Jc(n,k;z)=0$ when $k<0$ or $k>n$.
 When $z=1$, Jacobi-Stirling numbers reduce to 
Legendre-Stirling numbers, see~\cite{AndrewsLittlejohn,Mongelli,MongelliTP,Zhu2024}.

Gelineau and Zeng \cite[p.~6, Theorem~3]{GelineauZeng}
gave a signed-partition interpretation of the coefficients
of $\mathrm{JS}(n,k;z)$ in powers of $z+1$. In this paper, we prove the same interpretation for the coefficients in the binomial expansion of
$\mathrm{JS}(n,n-r;z)$, where $r$ is fixed.
For $r=1$, the recurrence~\eqref{recu} gives
\begin{equation}\label{eq:firstdiagonal}
 \mathrm{JS}(n,n-1;z)
   = \sum_{j=1}^{n-1}j(j+z)
   = (z+1)\binom{n}{2}+2\binom{n}{3},
\end{equation}
which appears in~\cite[Example~4.3]{AndrewsEtAl}.
For $r=2$, it is routine to check that
\begin{equation}\label{eq:secondExample}
\begin{aligned}
 \mathrm{JS}(n,n-2;z)
   ={}& (z+1)^2\binom{n}{3}+\bigl(3(z+1)^2+10(z+1)+4\bigr)\binom{n}{4}\\
      &+\bigl(20(z+1)+32\bigr)\binom{n}{5}+40\binom{n}{6}.
\end{aligned}
\end{equation}
In both cases the binomial indices range from $r+1$
to $3r$, and the coefficient of $\binom{n}{3r}$ is
independent of $z$. The $z=1$ case of~\eqref{eq:secondExample} appears in~\cite[Eq.~(19)]{Egge}. 
In~\cite[Theorem~7]{MaMaYeh}, the Legendre-Stirling number $\mathrm{JS}(n,n-r;1)$ is expanded in a shifted binomial basis.
It is therefore natural to seek binomial expansions of Jacobi-Stirling numbers.

For $n\ge1$, write $[n]=\{1,\ldots,n\}$ and put
$[\pm n]_0=\{0\}\cup\{\pm i:i\in[n]\}$.
Following~\cite[Definition~1]{GelineauZeng},
a \emph{signed $k$-partition} of $[\pm n]_0$ is a partition of $[\pm n]_0$
into $k+1$ nonempty blocks $B_0,B_1,\ldots,B_k$ satisfying
the following conditions:
\begin{enumerate}
\item The block $B_0$ contains $0$, but contains no pair
$i,-i$.
\item For each $j=1,\ldots,k$, the block $B_j$ contains both signs of its smallest absolute value and no other pair $i,-i$. 
\end{enumerate}
The block containing $0$ is called the \emph{zero block}, and the other blocks are unordered.
For example, $$\bigl\{
  \{0,3,-4\},\
  \{-1,1,4\},\
  \{-2,2,-3\}
\bigr\}$$
is a signed $2$-partition of $[\pm4]_0$.
Its zero block is $\{0,3,-4\}$.
The smallest absolute values in the two nonzero blocks
are $1$ and $2$, respectively, and each occurs with both signs.

Our first main result is the following.
\begin{theorem}\label{thm:main01}
For $n\geq r\geq 1$, there are polynomials $b_{r,i}(z)$ such that
\begin{equation}\label{eq:secondExpansion}
 \JS(n,n-r;z)=\sum_{i=r+1}^{3r}b_{r,i}(z)\binom{n}{i}.
\end{equation}
Each $b_{r,i}(z)$ is a polynomial in $z+1$ with nonnegative integer coefficients, and its
coefficient of $(z+1)^j$ counts signed $(i-r)$-partitions of $[\pm i]_0$
whose zero block consists of $0$ and exactly $j$ negative
entries, and every nonzero block has at least three elements.
For $r\ge1$ and $r+2\le i\le3r+3$, 
\begin{equation}\label{eq:brec}
\begin{aligned}
b_{r+1,i}(z)
={}&(i-r-1)(i-r-1+z)b_{r,i-1}(z)+(i-1)(2i-2r-3+z)b_{r,i-2}(z)\\
 &+(i-1)(i-2)b_{r,i-3}(z),
\end{aligned}
\end{equation}
with $b_{1,2}(z)=z+1,~b_{1,3}(z)=2$, and $b_{r,j}(z)=0$ when $j<r+1$ or $j>3r$.
\end{theorem}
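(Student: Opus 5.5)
We will argue combinatorially, starting from the Gelineau--Zeng interpretation \cite[Theorem~3]{GelineauZeng}: the coefficient of $(z+1)^j$ in $\JS(n,k;z)$ counts signed $k$-partitions of $[\pm n]_0$ whose zero block consists of $0$ and exactly $j$ negative entries. Take $k=n-r$. The plan is to classify each such partition by the set $S\subseteq[n]$ of absolute values that are not \emph{trivial}. Here a trivial value $m$ is one for which $\{-m,m\}$ is a whole nonzero block by itself.

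\textbf{Step 1: reduction to a fixed support.} Fix a partition and let $|S|=i$. After relabelling $S$ order-preservingly by $[i]$, we get a signed $(i-r)$-partition of $[\pm i]_0$. The zero block is unchanged in its number of negative entries, since trivial values never meet the zero block. In the reduced partition every nonzero block has at least three elements, because a two-element nonzero block is exactly a trivial pair. Conversely, any such reduced partition on $[\pm i]_0$, together with any $i$-subset $S\subseteq[n]$, rebuilds a unique original partition; the remaining $n-i$ values are added as doubleton blocks. Summing over $i$ gives \eqref{eq:secondExpansion}, with $b_{r,i}(z)$ the stated generating polynomial. This immediately gives nonnegative integer coefficients in $z+1$.

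\textbf{Step 2: the range of $i$.} Let the reduced partition have $m=i-r$ nonzero blocks, each of size $\ge3$. Count the elements: the zero block contributes $s\ge0$ elements besides $0$, so $2i=s+\sum_j|B_j|\ge 3m=3(i-r)$, which gives $i\le 3r$. For the lower bound, $m\ge0$ gives $i\ge r$. If $i=r$, then $m=0$, so all of $[\pm r]$ lies in the zero block, which is impossible since the zero block contains no pair $\pm1$. Hence $i\ge r+1$. The initial values $b_{1,2}=z+1$ and $b_{1,3}=2$ can be read off from \eqref{eq:firstdiagonal}.

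\textbf{Step 3: the recurrence \eqref{eq:brec}.} This is the main obstacle. Let $T_{r,i}$ denote the reduced objects, i.e.\ those with no doubleton blocks. Take an object in $T_{r+1,i}$ and remove the largest value $i$, that is, remove both $\pm i$. There are three cases.

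First, $\pm i$ lie in distinct blocks, and deleting them leaves every nonzero block of size $\ge3$. The result lies in $T_{r,i-1}$. Reinserting $i$ and $-i$ into two different blocks among the $i-1-r$ nonzero blocks and the zero block, with $-i$ allowed in the zero block, should produce the weight $(i-r-1)(i-r-1+z)$. This matches the factor $k(k+z)$ in \eqref{recu}, with the $z$-bookkeeping as in Gelineau--Zeng.

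Second, deleting $\pm i$ leaves exactly one block of size $2$ that is a doubleton $\{-a,a\}$. Absorbing this doubleton reduces to $T_{r,i-2}$. The choice of $a$ among $i-1$ values, combined with where $\pm i$ go, gives $(i-1)(2i-2r-3+z)$.

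Third, two doubletons arise, or a single block $\{-a,a,\pm i,\ldots\}$ collapses doubly. This reduces to $T_{r,i-3}$ with weight $(i-1)(i-2)$.

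I would first check the case split against \eqref{eq:secondExample} at $r=1$. If the bijective bookkeeping for the middle term proves messy, the fallback is algebraic. Substitute \eqref{eq:secondExpansion} into $\JS(n,n-r-1)=\JS(n-1,n-r-2)+(n-r-1)(n-r-1+z)\JS(n-1,n-r-1)$. Then rewrite $(n-r-1)(n-r-1+z)\binom{n-1}{i}$ in the basis $\binom{n}{\cdot}$, using $\binom{n-1}{i}=\binom{n}{i}-\binom{n-1}{i-1}$ and $n\binom{n-1}{i-1}=i\binom{n}{i}$. Comparing coefficients and using uniqueness of binomial coordinates should give \eqref{eq:brec} directly.
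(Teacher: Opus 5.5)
Your proposal is correct, but the step that actually closes the argument differs from the paper's. Steps 1 and 2 coincide with part (A) of the paper's proof. Reading $b_{1,2}=z+1$ and $b_{1,3}=2$ off \eqref{eq:firstdiagonal}, by uniqueness of binomial coordinates, is an acceptable substitute for the paper's direct listing of the partitions.

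For \eqref{eq:brec}, your combinatorial sketch follows the paper's case split by the number of doubletons created when $\pm i$ are deleted. As written, however, it only asserts the middle coefficient. The missing count is as follows. The collapsing block is either exactly $\{a,-a,i\}$, after which $-i$ goes to the zero block (factor $z+1$) or to one of the $i-r-2$ nonzero blocks, or exactly $\{a,-a,-i\}$, after which $i$ goes to one of those $i-r-2$ blocks. With $i-1$ choices of $a$ this gives $(i-1)\bigl((z+1)+2(i-r-2)\bigr)$. Your alternative of a single block $\{-a,a,\pm i,\ldots\}$ collapsing doubly cannot occur, because a nonzero block may not contain the second pair $i,-i$. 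The paper uses the same observation to show that $i$ and $-i$ always lie in different blocks.

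Your algebraic fallback is complete, and it is a genuinely different route. Put $m=n-1$. The second recurrence in \eqref{recu}, together with the expansion at levels $r$ and $r+1$, gives $\sum_i b_{r+1,i}\binom{m}{i-1}=(m-r)(m-r+z)\sum_j b_{r,j}\binom mj$ for all large $m$. Two applications of $m\binom mj=j\binom mj+(j+1)\binom m{j+1}$ give
\[
(m-r)(m-r+z)\binom mj=(j-r)(j-r+z)\binom mj+(j+1)(2j-2r+1+z)\binom m{j+1}+(j+1)(j+2)\binom m{j+2},
\]
and comparing coefficients of $\binom m{i-1}$ yields exactly \eqref{eq:brec}. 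This is the computation the paper itself uses in Section~\ref{sec:03} to obtain \eqref{eq:brec02} for $c_{r,i}$. Nonnegativity and the counting interpretation already come from your Step 1, so this route suffices for the theorem; it is shorter and avoids the relabelling bijections. The paper's combinatorial route, in exchange, explains each term of \eqref{eq:brec} as a way of inserting $\pm i$ into a smaller reduced partition.
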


In Section~\ref{sec:03}, we deduce the following expansion and show that
$c_{r,i}(z)-b_{r,i}(z)$ is a polynomial in $z+1$ with nonnegative integer coefficients:
\begin{equation*}
 \Jc(n,n-r;z)
 =\sum_{j=r+1}^{3r}c_{r,j}(z)\binom nj.
\end{equation*}

\begin{table}[htbp]
\centering
\caption{The descent polynomials $A_{k,i}(x)$ for $1\le k\le3$}
\label{tab:descent-small}
\begin{tabular}{c c l}
\hline
$k$ & $i$ & $A_{k,i}(x)$ \\
\hline
1 & 0 & $1+x$ \\
  & 1 & $1$ \\
2 & 0 & $1+14x+21x^2+4x^3$ \\
  & 1 & $2+12x+6x^2$ \\
  & 2 & $1+2x$ \\
3 & 0 & $1+75x+603x^2+1065x^3+460x^4+36x^5$ \\
  & 1 & $3+114x+501x^2+436x^3+66x^4$ \\
  & 2 & $3+55x+116x^2+36x^3$ \\
  & 3 & $1+8x+6x^2$ \\
\hline
\end{tabular}
\end{table}

Let $f_k(n;z)=\JS(n+k,n;z)$. For $0\leq i\leq k$, let $p_{k,i}(n)$ be the coefficient of $z^i$ in $f_k(n;z)$. Following
Gessel, Lin and Zeng~\cite[Theorem~1]{GesselLinZeng}, the {\it Jacobi-Stirling descent polynomial} $A_{k,i}(x)$ can be defined by
$$\sum_{n\geq 0}p_{k,i}(n)x^n=\frac{xA_{k,i}(x)}{(1-x)^{3k-i+1}},$$
where $k\geq 1$. Here we remove the common
factor $x$ from the numerators used in~\cite{GesselLinZeng}, see Table~\ref{tab:descent-small}.

Define the multiset $M_k=\{\overline{1},1,1,\overline{2},2,2,\ldots,\overline{k},k,k\}$,
where the elements are ordered by
$$\overline{1}<1<\overline{2}<2<\cdots <\overline{k}<k.$$
Let $[\overline{k}]=\{\overline{1},\overline{2},\ldots,\overline{k}\}$. For any subset $S\subseteq [\overline{k}]$, set $M_{k,S}=M_k\setminus S$.
A permutation of $M_{k,S}$ is a {\it Jacobi-Stirling permutation} if for each $i$, $1\leq i\leq k$, all entries between the two occurrences of the unbarred $i$ are larger than $i$. Let $\operatorname{JSP}_{k,S}$ be the set of Jacobi-Stirling permutations of $M_{k,S}$, and let
$$\operatorname{JSP}_{k,i}=\bigcup_{\substack{S\subseteq [\overline{k}]\\ |S|=i}}\operatorname{JSP}_{k,S},$$
where the union is taken over all choices
of $i$ deleted barred letters.
Let $w=w_1w_2\cdots w_m$ be a word on a totally ordered alphabet. We say that $w_i$ is a descent of $w$ if $w_i>w_{i+1}$, where $i\in[ m-1]$. Let $\operatorname{des}(w)$ be the number of descents of $w$. Let
$$A_{k,S}(x)=\sum_{w\in\operatorname{JSP}_{k,S}}x^{\operatorname{des}(w)}.$$

The reader is referred to~\cite{LinMaZhang} and~\cite{MMY} 
for some multivariate refinements of the Jacobi-Stirling descent polynomials.
Gessel, Lin and Zeng~\cite[Theorem~2]{GesselLinZeng} proved that $A_{k,i}(x)$ enumerates $\operatorname{JSP}_{k,i}$ by descents. Hence
$$A_{k,i}(x)=\sum_{\substack{S\subseteq[\overline{k}]\\|S|=i}}
                    A_{k,S}(x).$$
They observed that a theorem of Brenti gives the
real-rootedness for each $A_{k,S}(x)$ and conjectured the same property for $A_{k,i}$
\cite[Conjecture~15]{GesselLinZeng}.
In particular, both $A_{k,0}(x)$ and $A_{k,k}(x)$ are real-rooted.
For the other cases, real-rootedness of the summands does not by
itself imply real-rootedness of their sum.
Our second main result establishes five infinite families of their conjecture.
\begin{theorem}\label{thm:deletions}
For $1\le i\le k$ with $i\in\{1,2,3,k-2,k-1\}$, the polynomial $A_{k,i}(x)$ is real-rooted.
\end{theorem}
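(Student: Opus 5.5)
We would prove Theorem~\ref{thm:deletions} with compatible (interlacing) families and insertion recurrences. Brenti's theorem already makes each $A_{k,S}(x)$ real-rooted. The abstract also asserts a stronger statement: every nonzero nonnegative combination of the relevant summands has only simple negative zeros. We would therefore aim to show that, for fixed $k$ and fixed deletion type, the family $\{A_{k,S}(x)\}_{|S|=i}$ is \emph{compatible}. In practice this means showing that its members share a common interlacer, or, better, that after grouping by a suitable statistic they form an interlacing sequence $F_1\seps F_2\seps\cdots\seps F_m$ of polynomials with nonnegative coefficients. The key fact we would use is the Chudnovsky--Seymour/Wagner theorem: if $F_1,\dots,F_m$ are real-rooted with nonnegative coefficients and $F_a\seps F_b$ for all $a<b$, then every nonnegative combination is real-rooted. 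This gives the theorem as soon as the interlacing is established.

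\emph{Step 1: reduce $A_{k,S}$ to the position data of $S$.} A Jacobi--Stirling permutation of $M_{k,S}$ is built by inserting the letters $\overline{j},j,j$ (or just $j,j$ when $\overline{j}\in S$) for $j=1,\dots,k$ in increasing order. We would insert the largest value $k$ last into a permutation of $M_{k-1,S'}$. The pair $kk$ must be adjacent. It is inserted into one of the $m+1$ slots of a word of length $m$, and $\overline{k}$ is placed either in one of the slots around the block or elsewhere. Tracking descents gives an operator recurrence of the form
\[A_{k,S}(x)=(\alpha x+\beta)A_{k-1,S'}(x)+\gamma\,x(1-x)A_{k-1,S'}'(x),\]
with $\alpha,\beta,\gamma$ depending only on $k$ and on whether $\overline{k}\in S$. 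Since the length is $3k-1-|S\cap[\overline{k-1}]|$, the operator depends only on how many barred letters below $k$ were deleted.

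\emph{Step 2: group the summands.} This gives $A_{k,S}=T_{\epsilon_k}\cdots T_{\epsilon_1}(1)$, where each $T_\epsilon$ is a first-order operator indexed by the current length. For $i\in\{1,2,3\}$ we write $A_{k,i}=\sum_{\text{positions}}T\cdots T(1)$ and organize it as a sum over the positions $j_1<\dots<j_i$ of the deleted letters. For $i=k-1$ or $k-2$ we instead sum over the positions of the $1$ or $2$ retained barred letters. In each case we define auxiliary polynomials $G_{k,j}$, with $j$ recording the last deletion (or retention) position, satisfying a triangular recurrence $G_{k+1,j}=T G_{k,j}$ plus a term $T' G_{k,j'}$ from a new deletion. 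We would then prove by induction on $k$ that $(G_{k,j})_j$ is an interlacing sequence. The induction uses two standard lemmas: an operator $T=(\alpha x+\beta)+\gamma x(1-x)D$ with nonnegative parameters preserves interlacing, and if $f\seps g$, $f\seps h$, then $f\seps g+h$. Summing over $j$ then gives real-rootedness, and in fact compatibility, of $A_{k,i}$. For $i=2,3$, and symmetrically $k-2$, the family is indexed by the last position while the earlier positions are absorbed into the auxiliary polynomials. We would check the low-$k$ base cases against Table~\ref{tab:descent-small}.

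\emph{Main obstacle.} The hard step is verifying that the two distinct operators (deleted versus retained $\overline{k}$) applied to an interlacing sequence again yield one interlacing sequence after the triangular mixing. The lengths differ, so the degrees jump differently and the endpoint behaviour at $x=0$ and $x=-\infty$ must be controlled. We would first try the ``sign-alternation at roots'' criterion: evaluate the new polynomials at the zeros of a common interlacer. If that fails, we would use the Haglund--Visontai multivariate stability approach on the bivariate operators. This control is presumably why only $i$ or $k-i$ at most $3$ or $2$ is handled.
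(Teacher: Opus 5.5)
\textbf{Verdict: there is a genuine gap.} Your proposal is a framework rather than a proof, and the step that carries all the weight is the one you leave open.

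\emph{What is missing.} You never say which polynomials interlace which: no explicit $G_{k,j}$ and no common interlacer. The only propagation tool you name is that one operator $(\alpha x+\beta)+\gamma x(1-x)\frac{d}{dx}$ preserves interlacing. That claim is overstated: the leading coefficient of the image is $(\alpha-\gamma d)$ times that of $f$, so you need $L>\deg$. More importantly, it is not what the recursion requires. Two families being compared consist of words of different lengths, so they are acted on by different operators $D_L$ and $D_{L+1}$. When $\overline k$ is deleted on one side and retained on the other, they even receive different numbers of insertions. Your Step~1 formula is also wrong in that case: a retained $\overline k$ gives $A_{k,S}=D_{L+1}D_LA_{k-1,S}$, a second-order operator. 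You correctly flag this as the main obstacle, but ``try sign alternation, else Haglund--Visontai'' does not resolve it.

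\emph{How the paper closes the gap.} It uses two concrete ingredients.
\begin{enumerate}
\item \emph{A comparison lemma for unequal lengths.} If $f\prec g$ with $\deg g=\deg f+1$, then $D_Lf\prec D_{L+1}g$ for $L>\deg f$, and $D_Lf\prec D_Lg$ for $L>\deg g$. The first assertion comes from the identity $D_Lf=\frac fg\,D_{L+1}g+xg\bigl((1-x)\frac fg\bigr)'$. The partial-fraction coefficients $f(r_i)/g'(r_i)$ are positive, and this fixes the sign of $D_Lf$ at the zeros of $D_{L+1}g$.
\item \emph{One common interlacer per family.} Each is taken at a staggered stage of the full insertion chain, so that degrees differ by one and both sides always receive the same number of insertions:
\begin{itemize}
\item $A_{k,\{\overline j\}}\prec A_{k,0}$;
\item $A_{k,S}\prec H_k=D_{3k-3}A_{k-1,0}$ for $|S|=2$;
\item $A_{k,S}\prec A_{k-1,0}$ for $|S|=3$;
\item $A_{k,k}\prec A_{k,S}$ for $|S|=k-1$;
\item $A_{k+1,k+1}\prec A_{k,S}$ for $|S|=k-2$.
\end{itemize}
The inductive step splits on whether $\overline k\in S$. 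It advances the comparison polynomial along the full chain, e.g.\ $H_{k-1}\to A_{k-1,0}\to H_k$, so that every step pairs operators in one of the two admissible ways.
\end{enumerate}
With a common interlacer, all summands have the same sign at each of its zeros. Summing is then immediate, so no position-indexed interlacing sequence and no Chudnovsky--Seymour argument is needed.

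\emph{The restriction on $i$.} The same bookkeeping explains it more precisely than your guess about endpoint behaviour. In the analogous induction for $|S|=4$, one would need a pairing of the form $D_{L+1}f\prec D_Lg$, with the lower-degree polynomial carried by longer words. The lemma does not supply this (compare the paper's closing remark reducing $A_{k,4}$ to $D_{3k-7}A_{k-1,4}\prec A_{k-2,0}$).
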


A nonnegative sequence $a_0,a_1,\ldots,a_n$ is {\it top heavy} if
$a_k\leq  a_{n-k}$
 for all $0\le k<{n/2}$. We call a polynomial top heavy if its coefficient sequence is
top heavy. If no equality sign occurs, we call the polynomial strictly top heavy.
Top-heavy inequalities occur naturally in algebraic and combinatorial geometry, where they are often consequences of Lefschetz-type injectivity~\cite{HW}.
Our final main result establishes analogous inequalities for the
coefficients of Jacobi--Stirling descent polynomials.
\begin{theorem}\label{thm:top-heavy01}
Let $k\ge2$ and $0\le i\le k$. The Jacobi-Stirling descent polynomial $A_{k,i}(x)$ is strictly top heavy and has an
increasing left half. More precisely, write $A_{k,i}(x)=\sum_{j=0}^{d}a_jx^j$, where $d=2k-i-1$.
Then $a_j<a_{d-j}$ for $0\le j<d/2$ and $a_0<a_1<\cdots<a_{\lfloor d/2\rfloor}$.
\end{theorem}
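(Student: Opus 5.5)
The plan is to use the combinatorial model and an insertion operator, as the abstract indicates. By Gessel--Lin--Zeng, $A_{k,i}(x)=\sum_{|S|=i}A_{k,S}(x)$. Each summand is controlled through the standard recursive construction of Jacobi--Stirling permutations: a permutation in $\operatorname{JSP}_{k,S}$ is obtained from one in $\operatorname{JSP}_{k-1,S'}$ by inserting the block $kk$, or $\overline{k}kk$, or $k\overline{k}k$, or $kk\overline{k}$ (with $\overline{k}$ adjacent to or between the two $k$'s, i.e.\ all of $\overline k,k,k$ placed consecutively in some admissible pattern). Here $S'=S\cap[\overline{k-1}]$, and the pattern is determined by whether $\overline{k}\in S$. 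Since the new letters are the largest, the change in $\operatorname{des}$ depends only on whether the insertion slot is a descent slot, an ascent slot, or an end.

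This yields linear operators of the form
\[
A_{k,S}(x)=\bigl(\alpha+\beta x\bigr)A_{k-1,S'}(x)+\gamma\,x(1-x)A_{k-1,S'}'(x),
\]
with small integer constants depending on the length $m$ and on whether $\overline k$ is present. Summing over $S$ gives
\[
A_{k,i}=T_{m}A_{k-1,i-1}+U_{m'}A_{k-1,i},
\]
where $T$ and $U$ are explicit operators of the form $p(x)+q(x)D$.

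I will prove by induction on $k$ that $A_{k,i}$ is strictly top heavy and has a strictly increasing left half, and I will prove both statements jointly. The key observation is that an operator $f\mapsto (a+bx)f+cx(1-x)f'$ acts on coefficients by
\[
a_j'=(a+cj)a_j+(b+c(m-j+1))a_{j-1},
\]
with $m$ tied to the degree. This is the familiar Eulerian-type recurrence.

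First I would check that top-heaviness is preserved. Compare $a'_j$ with $a'_{d'-j}$, where $d'=d+1$ or $d+2$ according to the degree shift. Pair the terms, write the weight differences explicitly, and use the inductive inequalities $a_j\le a_{d-j}$, $a_{j-1}\le a_{d-j+1}$ together with the monotonicity of the left half. The increasing left half is handled the same way: $a'_{j}-a'_{j-1}$ expands into differences $a_j-a_{j-1}$ and $a_{j-1}-a_{j-2}$ with nonnegative weights whenever $j\le d'/2$. The degree shifts differ for the two summands ($d=2k-i-1$ drops by $1$ when $i$ increases), so I will align centers. $T A_{k-1,i-1}$ and $UA_{k-1,i}$ both have degree $2k-i-1$, and the properties are closed under sums of polynomials of the same degree. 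Strictness comes from base cases in Table~\ref{tab:descent-small} and from a strictly positive weight in one term.

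The main obstacle I expect is the middle of the sequence. Near $j\approx d/2$, the weights multiplying the inductive hypothesis can become negative, and the left-half monotonicity and top-heaviness inequalities then interact. If the direct weight comparison fails there, I would try the symmetric decomposition $A=B+xC$ with $B$ symmetric and $C$ having nonnegative coefficients, as in Brändén's approach. I would check that the insertion operators preserve a suitable pair (symmetric unimodal, nonnegative) decomposition, which gives both properties at once. Failing that, I would fall back on the real-rootedness of each $A_{k,S}$ (Brenti) plus interlacing-free coefficient arguments.
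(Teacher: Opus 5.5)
Your overall plan matches the paper's: insertion operators, an induction that carries strict top-heaviness and the increasing left half together, and closure under sums of polynomials of equal degree. But the recurrence you build it on is wrong, and the decisive coefficient inequality is left unproved.

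\textbf{The insertion model is wrong.} Since no letter exceeds $k$, the two copies of $k$ must be adjacent, so $k\overline{k}k$ is forbidden. However, $\overline{k}$ is not tied to the pair $kk$; it may occupy any gap. The correct construction inserts $\overline{k}$ into one of the $L+1$ gaps of a word of length $L=3(k-1)-|S|$, and then inserts $kk$ into one of the $L+2$ gaps of the result. This gives $(L+1)(L+2)$ extensions per word, not the $O(L)$ of your consecutive-block model. For example, your model yields at most $2\cdot 4\cdot 3=24$ words at $k=2$, whereas $A_{2,0}(1)=40$. Consequently your first-order form $(\alpha+\beta x)f+\gamma x(1-x)f'$ is false when $\overline{k}$ is retained. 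With $D_Lf=(1+Lx)f+x(1-x)f'$, the correct relations are
\[
A_{k,S\cup\{\overline{k}\}}=D_LA_{k-1,S},\qquad A_{k,S}=D_{L+1}D_LA_{k-1,S}\quad(\overline{k}\notin S),
\]
so the retained case is a composition of two first-order steps. Summing over $S$ gives
\[
A_{k,i}=D_{3k-i-2}\bigl(A_{k-1,i-1}+D_{3k-i-3}A_{k-1,i}\bigr).
\]
Both inner terms have degree $2k-i-2$. So inducting on $A_{k,i}$ directly works as well as the paper's induction on each $A_{k,S}$ followed by summation, once you have a one-step lemma for $D_L$ to apply once or twice.

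\textbf{The one-step lemma is missing.} This is the heart of the proof, and your proposal stops there while anticipating trouble that does not occur. For $f=\sum_{j=0}^{d}a_jx^j$ with $a_{-1}=a_{d+1}=0$, one has $D_Lf=\sum_j b_jx^j$ with $b_j=(j+1)a_j+(L-j+1)a_{j-1}$. Note that the parameter is the word length $L$, not the degree, and your coefficient formula should read $(a+cj)a_j+(b-c(j-1))a_{j-1}$. Then
\begin{align*}
b_{j+1}-b_j&=(j+2)(a_{j+1}-a_j)+(L-j+1)(a_j-a_{j-1}),\\
b_{d+1-j}-b_j&=(d+2-j)(a_{d+1-j}-a_{j-1})+(L-d+j)(a_{d-j}-a_j)+(L-d-1)(a_j-a_{j-1}).
\end{align*}
All weights are nonnegative as soon as $L\ge d+1$. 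In the Jacobi--Stirling construction, $L-\deg f=k$ at every insertion, including the intermediate one after $\overline{k}$. The only comparison outside the generic range is $b_m<b_{m+1}$ when $d=2m+1$. It uses $a_m<a_{m+1}$, i.e.\ top-heaviness of $f$, while the top-heaviness identity in turn uses the increasing left half. This is why the two properties must be carried jointly, as in the paper's Lemma~\ref{lem:top-heavy-operator}.

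So the Br\"and\'en-type decomposition is unnecessary. The real-rootedness fallback could not work on its own: real-rootedness is preserved by reversing the coefficient sequence, while top-heaviness is not.
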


The polynomials $A_{k,k}(x)$ are the well-studied second-order Eulerian polynomials over the set of Stirling permutations of order $k$. They satisfy the 
recurrence relation
$$A_{k+1,k+1}(x)=(1+2kx)A_{k,k}(x)+x(1-x)A_{k,k}'(x),$$ 
with the initial conditions $A_{1,1}(x)=1$ and $A_{2,2}(x)=1+2x$, see~\cite{GesselStanley,MMY} for instance. 
A special case of Theorem~\ref{thm:top-heavy01} says that $A_{k,k}(x)$ are strictly top heavy and have
an increasing left half for $k\ge2$.
In Sections~\ref{sec:02},~\ref{sec:04} and~\ref{section5}, we shall prove Theorem~\ref{thm:main01},~\ref{thm:deletions} and~\ref{thm:top-heavy01}, respectively. 
%%%%%%%%%%%%%%%%%%%%%%%%%%%%%%%%%%%%%%%%%%%%%%%%%%%%%%%%%%%
\section{Proof of Theorem~\ref{thm:main01}}\label{sec:02}
%%%%%%%%%%%%%%%%%%%%%%%%%%%%%%%%%%%%%%%%%%%%%%%%%%%%%%%%%%
We begin with the weighted signed partition interpretation of $\JS(n,k;z)$.
\begin{lemma}[{\cite[p.~6, Theorem~3]{GelineauZeng}}]
\label{lem:weighted}
Consider signed $k$-partitions of $[\pm n]_0$
whose zero block contains no positive entry. Give such a partition weight $(z+1)^j$, where $j$
is the number of negative entries in its zero block.
Then their total weight is $\JS(n,k;z)$.
\end{lemma}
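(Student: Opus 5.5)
We would prove Lemma~\ref{lem:weighted} by showing that the weighted count satisfies the same recurrence~\eqref{recu} and the same initial conditions as $\JS(n,k;z)$. Let $W(n,k)$ be the total weight $(z+1)^{j}$, where $j$ is the number of negative entries in the zero block. The sum runs over signed $k$-partitions of $[\pm n]_0$ whose zero block contains no positive entry. The argument deletes the pair $\{n,-n\}$ of largest absolute value and tracks where it could have been inserted.

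\emph{Initial conditions.} For $n=0$ the only partition is $\{\{0\}\}$, a signed $0$-partition of weight $1$, so $W(0,0)=1$ and $W(0,k)=0$ for $k\ge1$. For $n\ge1$ and $k=0$, every positive entry would have to lie in the zero block, which is forbidden, so $W(n,0)=0$. The cases $k>n$ vanish because each nonzero block needs its own minimal absolute value.

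\emph{Recurrence.} Fix an admissible signed $k$-partition $\pi$ of $[\pm n]_0$ with $n\ge1$ and $k\ge 1$. We split into two cases.
\begin{enumerate}
\item If $\{-n,n\}$ is itself a block, it is a nonzero block in which $n$ is the smallest absolute value. Removing it leaves an admissible signed $(k-1)$-partition of $[\pm(n-1)]_0$ of the same weight. Conversely, adjoining this block to any admissible $(k-1)$-partition is valid. This case contributes $W(n-1,k-1)$.
\item Otherwise, neither $n$ nor $-n$ is the minimal absolute value of its block, since $n$ is the largest absolute value. The conditions force three things:
\begin{itemize}
\item $n$ lies in one of the $k$ nonzero blocks, because the zero block has no positive entries;
\item $-n$ lies in a different block, because a nonzero block may contain only the pair of its minimum;
\item $-n$ lies either in the zero block, which multiplies the weight by $z+1$, or in one of the other $k-1$ nonzero blocks.
\end{itemize}
Deleting $n$ and $-n$ leaves every block nonempty. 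The zero block still contains $0$, and each nonzero block still contains its minimal pair, since it was not $\{-n,n\}$. Minimal absolute values are unchanged, so the result is an admissible signed $k$-partition of $[\pm(n-1)]_0$. Conversely, each of the insertions just listed yields an admissible partition. This case contributes $k\bigl((z+1)+(k-1)\bigr)W(n-1,k)=k(k+z)W(n-1,k)$.
\end{enumerate}

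Hence $W(n,k)=W(n-1,k-1)+k(k+z)W(n-1,k)$, which is the second recurrence in~\eqref{recu}. Induction on $n$ then gives $W(n,k)=\JS(n,k;z)$.

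The only delicate point is the bijectivity check in the second case. One must verify that the constraint ``no other pair $i,-i$'' excludes exactly the placement of $-n$ in the block of $n$, and nothing else. One must also verify that removing $n$ never destroys the minimal-pair property of a block, which holds because $n$ is never a minimum there.
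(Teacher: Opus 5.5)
Your proof is correct and takes essentially the same route as the paper. Both argue by induction on $n$: delete the pair $n,-n$, split according to whether $\{n,-n\}$ is a block, and otherwise count the $k\bigl((z+1)+(k-1)\bigr)=k(k+z)$ reinsertions, with the same initial-condition checks.
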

\begin{proof}
When $n=0$, the only partition is $\{\{0\}\}$.
If $k=0<n$, the zero block would contain both $1$ and $-1$, contrary to the definition.
If $k>n$, there is no partition.

Now let $1\le k\le n$, and assume the interpretation holds
for $n-1$.
If $n$ and $-n$ belong to the same block, that block must be
$\{n,-n\}$.
Deleting this block gives a signed $(k-1)$-partition of
$[\pm(n-1)]_0$ with the same weight.
Conversely, adjoining $\{n,-n\}$ reverses this operation. Thus this case contributes $\JS(n-1,k-1;z)$.
Otherwise, $n$ and $-n$ lie in different blocks.
Neither is the smallest absolute value in a nonzero block, so deleting
them leaves a signed $k$-partition of $[\pm(n-1)]_0$.
To reverse this deletion, place $n$ in any of the $k$
nonzero blocks. Then place $-n$ either in the zero block,
with the weight factor $z+1$, or in one of the other
$k-1$ nonzero blocks.
These insertions leave all block minima unchanged.
Their contribution is 
\[
\bigl(k(z+1)+k(k-1)\bigr)\JS(n-1,k;z)
  =k(k+z)\JS(n-1,k;z).
\]
The total weights satisfy the second recurrence
in~\eqref{recu}. This completes the induction.
\end{proof}

\begin{proof}[Proof of Theorem~\ref{thm:main01}]
(A) We first prove the expansion~\eqref{eq:secondExpansion}.
Start with a signed $(n-r)$-partition of $[\pm n]_0$
whose zero block contains no positive entry.

Delete all two-element nonzero blocks, each of the form $\{s,-s\}$. Suppose that $i-r$ nonzero blocks remain.
Then $n-i$ blocks have been deleted, each of the form
$\{s,-s\}$, so $i$ positive entries and their negatives remain.
Relabel the remaining positive entries, in increasing order,
by $1,\ldots,i$, and relabel their negatives accordingly.
This gives a signed $(i-r)$-partition of $[\pm i]_0$
whose nonzero blocks all have at least three elements.
The number of negative entries in the zero block is unchanged,
so the weight is preserved. If no nonzero block remained, then $i=r\ge1$,
and the zero block would contain both $1$ and $-1$,
contrary to the definition.
Hence $i\ge r+1$.
The $i-r$ nonzero blocks each contain at least three
elements. Since there are only $2i$ elements other than $0$,
$3(i-r)\le2i$ and hence $i\le3r$.
Conversely, take such a partition of $[\pm i]_0$ and
choose an $i$-element subset of $[n]$.
Relabel the positive entries increasingly by the chosen
elements, and relabel their negatives accordingly.
For each unused $s\in[n]$, add the block $\{s,-s\}$.
The added blocks are exactly the two-element nonzero blocks,
so this reverses the preceding construction.
Thus each partition of $[\pm i]_0$ gives $\binom ni$
signed $(n-r)$-partitions of $[\pm n]_0$, all with the same weight.
Let $b_{r,i}(z)$ be the total weight of these signed
$(i-r)$-partitions of $[\pm i]_0$.
The preceding correspondence gives
\[
\JS(n,n-r;z)
  =\sum_{i=r+1}^{3r}b_{r,i}(z)\binom ni.
\]
The coefficient of $(z+1)^j$ in $b_{r,i}(z)$ therefore
counts such partitions whose zero block only consists of $0$ and $j$ negative
entries. Thus $b_{r,i}(z)$ is a polynomial
in $z+1$ with nonnegative integer coefficients. 

(B) We now prove the recurrence of $b_{r,i}(z)$.
For $r=1$ and $i=2$, the only partition is
\[
\bigl\{\{0,-2\},\{-1,1,2\}\bigr\},
\]
and so $b_{1,2}(z)=z+1$.
For $r=1$ and $i=3$, the only two partitions are
\[
\begin{aligned}
&\bigl\{\{0\},\{-1,1,3\},\{-2,2,-3\}\bigr\},~~\{\{0\},\{-1,1,-3\},\{-2,2,3\}\bigr\}.
\end{aligned}
\]
Both have weight $1$, so $b_{1,3}(z)=2$.
Consider a partition contributing to $b_{r+1,i}(z)$.
The entry $i$ lies in a nonzero block.
If this block also contained $-i$, then $i$ would be
its smallest absolute value. Since $i$ is also the largest
absolute value, the block would be $\{i,-i\}$,
contrary to the requirement of at least three elements.
Thus $-i$ lies in the zero block or in another nonzero block.
Delete $i$ and $-i$, and then delete the nonzero blocks
that now have two elements.
There are zero, one, or two such blocks, and let $h$ be their
number. 

\begin{itemize}
\item[$(c_1)$] Consider $h=0$.
Insert $i$ into one of the $i-r-1$ nonzero blocks.
Then insert $-i$ either into the zero block, with weight
factor $z+1$, or into one of the other $i-r-2$ nonzero
blocks. The contribution is $(i-r-1)[(z+1)+i-r-2]b_{r,i-1}(z)$.

\item[$(c_2)$] Consider $h=1$.
Take a partition contributing to $b_{r,i-2}(z)$
and choose $a\in[i-1]$.
Before adjoining $\{a,-a\}$, relabel the entries as follows:
increase each positive entry at least $a$ by $1$,
and decrease each negative entry at most $-a$ by $1$.
Leave all other entries unchanged.
For example, if $i=5$ and $a=2$, we relabel 
$\{1,2,3\}$ as $\{1,3,4\}$ and relabel
$\{-1,-2,-3\}$ as $\{-1,-3,-4\}$.
We can then adjoin $\{2,-2\}$ without repeating any entry.
After adjoining $\{a,-a\}$, we obtain a partition of
$[\pm(i-1)]_0$. Note that the deleted block was either $\{a,-a,i\}$ or
$\{a,-a,-i\}$.
In the first case, $-i$ may be inserted into the zero
block or any of the $i-r-2$ remaining nonzero blocks,
giving the factor $(z+1)+(i-r-2)$.
In the second case, $i$ may be inserted into any of
these $i-r-2$ nonzero blocks.
There are $i-1$ choices for $a$, so the contribution is
\[
(i-1)\bigl((z+1)+2(i-r-2)\bigr)b_{r,i-2}(z)
=(i-1)(2i-2r-3+z)b_{r,i-2}(z).
\]

\item[$(c_3)$] Consider $h=2$.
Take a partition contributing to $b_{r,i-3}(z)$.
Choose $1\le a<b\le i-1$ in $\binom{i-1}{2}$ ways.
Relabel the entries as in $(c_2)$, first with $a$
and then with $b$.
Adjoin the blocks $\{a,-a\}$ and $\{b,-b\}$.
Insert $i$ into one of these blocks and $-i$ into
the other, in either of two ways.
The weight is unchanged, so the contribution is
\[
2\binom{i-1}{2}b_{r,i-3}(z)
=(i-1)(i-2)b_{r,i-3}(z).
\]
\end{itemize}
Adding the three contributions gives the recurrence. This completes the proof.
\end{proof}

By~\eqref{eq:brec}, it is easy to verify that 
$b_{r,r+1}(z)=(z+1)^r$ and $b_{r,3r}(z)=\frac{(3r)!}{3^r r!}$.
For convenience, in Table~\ref{tab:coefficients} we list the coefficients
$b_{r,i}(z)$ for $1\le r\le3$.

\begin{table}[ht]
\centering
\caption{Coefficients of $(z+1)^j$ in $b_{r,i}(z)$ for $r\leq3$.}
\begin{tabular}{cc|rrrr}
\hline
$r$ & $i$ & $j=0$ & $j=1$ & $j=2$ & $j=3$\\
\hline
1 & 2 &    0 &    1 &   0 &  0\\
  & 3 &    2 &    0 &   0 &  0\\
\hline
2 & 3 &    0 &    0 &   1 &  0\\
  & 4 &    4 &   10 &   3 &  0\\
  & 5 &   32 &   20 &   0 &  0\\
  & 6 &   40 &    0 &   0 &  0\\
\hline
3 & 4 &    0 &    0 &   0 &  1\\
  & 5 &    8 &   28 &  34 & 10\\
  & 6 &  272 &  436 & 190 & 15\\
  & 7 & 1752 & 1372 & 210 &  0\\
  & 8 & 3584 & 1120 &   0 &  0\\
  & 9 & 2240 &    0 &   0 &  0\\
\hline
\end{tabular}\label{tab:coefficients}
\end{table}

\section{The difference between unsigned
first-kind and second-kind coefficients}\label{sec:03}
For $i\ge1$ and arbitrary $x$, set
$\binom{x}{i}=x(x-1)\cdots(x-i+1)/i!$, and set
$\binom{x}{0}=1$.
Gessel, Lin and Zeng~\cite[Proposition~8]{GesselLinZeng} proved a
reciprocity between the first-kind and second-kind diagonals.
Together with Theorem~\ref{thm:main01}, we can now give the following expansion.
\begin{corollary}\label{cor:firstExpansion}
For $r\ge1$ and $n\ge r$,
\begin{equation*}
\Jc(n,n-r;z)
 =(-1)^r\sum_{i=r+1}^{3r}
   b_{r,i}(-z)\binom{r-n}{i}.
\end{equation*}
\end{corollary}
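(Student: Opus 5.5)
The plan is to combine Theorem~\ref{thm:main01} with the reciprocity of Gessel, Lin and Zeng~\cite[Proposition~8]{GesselLinZeng}. I expect that reciprocity to read, in our notation,
\[
\Jc(n,n-r;z)=(-1)^r\,\JS(r-n,-n;-z)
\]
(or an equivalent form), where the right side means the polynomial in $n$ that interpolates the diagonal $\JS(m,m-r;z)$, evaluated at $m=r-n$ with $z$ replaced by $-z$. The first step is therefore to check that for fixed $r$ both $n\mapsto \Jc(n,n-r;z)$ and $n\mapsto\JS(n,n-r;z)$ are polynomials in $n$. For $\JS$ this is immediate from \eqref{eq:secondExpansion}: the right side $\sum_{i=r+1}^{3r}b_{r,i}(z)\binom{n}{i}$ is a polynomial in $n$ of degree at most $3r$. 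It agrees with $\JS(n,n-r;z)$ for all $n\ge r$, and it also vanishes at $n=0,1,\dots,r$ since $i\ge r+1$. This lets us regard \eqref{eq:secondExpansion} as a polynomial identity in $n$.

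Next I would substitute. Write $P_r(m;z)=\sum_{i=r+1}^{3r}b_{r,i}(z)\binom{m}{i}$, using the extended binomial $\binom{x}{i}$ just defined. Reciprocity should give $\Jc(n,n-r;z)=(-1)^rP_r(r-n;-z)$, and expanding yields exactly
\[
(-1)^r\sum_{i=r+1}^{3r}b_{r,i}(-z)\binom{r-n}{i}.
\]

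The main obstacle is pinning down the precise normalization of~\cite[Proposition~8]{GesselLinZeng}: which sign, which shift ($r-n$ versus $-n$ or $n-r$), and whether $z\mapsto -z$. If the cited form is not directly in this shape, I would verify it independently. The route is to show that $Q(n)=(-1)^rP_r(r-n;-z)$ satisfies the first-kind diagonal recurrence
\[
\Jc(n,n-r)-\Jc(n-1,n-1-r)=(n-1)(n-1+z)\,\Jc(n-1,n-r).
\]
For the second kind, \eqref{recu} gives $P_r(m)-P_r(m-1)=(m-r)(m-r+z)P_{r-1}(m-1)$. Put $m=r-n+1$ and replace $z$ by $-z$. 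The factor becomes $(1-n)(1-n-z)=(n-1)(n-1+z)$, and the shift $r\to r-1$ in the index $m=r-n+1$ corresponds to $n-1\mapsto (r-1)-(n-1)$. The sign $(-1)^r$ versus $(-1)^{r-1}$ absorbs the reversal of the difference direction.

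With the recurrence in hand, induction on $r$ finishes the argument. The base case is $P_0\equiv1$, matching $\Jc(n,n)=1$. The initial value $Q(r)=0$ holds for $r\ge1$ because $\binom{0}{i}=0$ for $i\ge1$, and this matches $\Jc(r,0)=0$. These together determine $\Jc(n,n-r;z)$ for all $n\ge r$, which proves the corollary.
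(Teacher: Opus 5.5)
Your fallback argument is correct and is essentially the paper's proof. The paper also sets $G_r(n)=(-1)^rF_r(r-n)$, where $F_r$ interpolates $\JS(m,m-r;-z)$. It extends the second-kind diagonal recurrence to all $m$ by polynomiality, verifies that $G_r$ satisfies the first-kind diagonal recurrence, and matches initial values, so citing \cite[Proposition~8]{GesselLinZeng} is not needed.

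The only differences are cosmetic. You anchor at $n=r$ via $\binom{0}{i}=0$, whereas the paper anchors at $n=0$ via $\JS(r,0;-z)=0$. You should also state explicitly that $P_r(m)-P_r(m-1)=(m-r)(m-r+z)P_{r-1}(m-1)$ holds as a polynomial identity in $m$ before you substitute $m=r-n+1\le 0$.
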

\begin{proof}
Let $F_r(m)$ denote the polynomial that
agrees with $\mathrm{JS}(m,m-r;-z)$ for integers $m\geq r$, and put
$F_0(m)=1$.
By~\eqref{recu}, we find that
$F_r(m+1)-F_r(m)
 =(m-r+1)(m-r+1-z)F_{r-1}(m).$
Both sides are polynomials in $m$, so this identity also
holds at negative integers.
Put $G_r(n)=(-1)^rF_r(r-n)$.
Taking $m=r-n-1$ gives
\[
\begin{aligned}
G_r(n+1)-G_r(n)
 &=(-1)^{r+1}
   \bigl(F_r(r-n)-F_r(r-n-1)\bigr)=n(n+z)G_{r-1}(n).
\end{aligned}
\]
Moreover, $G_0(n)=1$ and
$G_r(0)=(-1)^r\JS(r,0;-z)=0$ for $r\ge1$.
Thus $G_r(n)$ satisfies the recurrence and initial conditions
for $\Jc(n,n-r;z)$, so they agree.
Substituting~\eqref{eq:secondExpansion} into $F_r(r-n)$
yields the desired formula.
\end{proof}

The negative-binomial identity gives
\[
\binom{r-n}{i}=(-1)^i\binom{n+i-r-1}{i}.
\]
Taking the coefficient of $t^i$ in
$(1+t)^n(1+t)^{i-r-1}$, Vandermonde's identity gives
\[(-1)^i\binom{n+i-r-1}{i}
=(-1)^i\sum_{j=r+1}^{i}
  \binom{i-r-1}{j-r-1}\binom{n}{j}.
\]
Substituting this identity into
Corollary~\ref{cor:firstExpansion}  gives
\begin{equation}\label{Jcnrz}
 \Jc(n,n-r;z)
 =\sum_{j=r+1}^{3r}c_{r,j}(z)\binom nj,
\end{equation}
where
\[
 c_{r,j}(z)
 =\sum_{i=j}^{3r}(-1)^{r+i}
   \binom{i-r-1}{j-r-1}b_{r,i}(-z).
\]

By~\eqref{recu}, we get $\Jc(n+1,n-r;z)-\Jc(n,n-r-1;z)
 =n(n+z)\Jc(n,n-r;z)$. Note that
$$
\Jc(n+1,n-r;z)
 =\Jc(n+1,(n+1)-(r+1);z)
   =\sum_i c_{r+1,i}(z)\binom{n+1}{i}.$$
Combining this with~\eqref{Jcnrz}, we get
\[
\begin{aligned}
&\Jc(n+1,n-r;z)-\Jc(n,n-r-1;z)\\
&=\sum_i c_{r+1,i}(z)\binom{n+1}{i}
  -\sum_i c_{r+1,i}(z)\binom ni\\
&=\sum_i c_{r+1,i}(z)
  \left(\binom{n+1}{i}-\binom ni\right)\\
&=\sum_i c_{r+1,i}(z)\binom n{i-1}.
\end{aligned}
\]
Therefore, we get $$\sum_i c_{r+1,i}(z)\binom n{i-1}=n(n+z)\Jc(n,n-r;z)=n(n+z)\sum_j c_{r,j}(z)\binom nj.$$
Note that
\[
(n+z)\binom nj
 =(j+z)\binom nj+(j+1)\binom n{j+1}.
\]
Multiplying by $n$ and applying the same binomial identity
to each term yields
\[
\begin{aligned}
n(n+z)\binom nj
=&(j+z)
 \left(j\binom nj+(j+1)\binom n{j+1}\right)+(j+1)
 \left((j+1)\binom n{j+1}
              +(j+2)\binom n{j+2}\right)\\
=&j(j+z)\binom nj
 +(j+1)(2j+1+z)\binom n{j+1}+(j+1)(j+2)\binom n{j+2}.
\end{aligned}
\]
We now compare coefficients of $\binom n{i-1}$.
The first term contributes when $j=i-1$, the second
when $j=i-2$, and the third when $j=i-3$.
Thus, for $r+2\le i\le3r+3$,
\begin{equation}\label{eq:brec02}
\begin{aligned}
c_{r+1,i}(z)&=(i-1)(i-1+z)c_{r,i-1}(z)+(i-1)(2i-3+z)c_{r,i-2}(z)\\
&+(i-1)(i-2)c_{r,i-3}(z),
\end{aligned}
\end{equation}
where we set $c_{r,j}(z)=0$ when $j<r+1$ or $j>3r$.
We can now conclude the following result.
\begin{theorem}\label{prop:comparison}
For $r\ge1$ and $r+1\le i\le3r$,
$c_{r,i}(z)-b_{r,i}(z)$ is a polynomial in $z+1$ with nonnegative integer coefficients.
\end{theorem}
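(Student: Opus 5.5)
The plan is to compare the recurrence \eqref{eq:brec02} for $c_{r,i}(z)$ with the recurrence \eqref{eq:brec} for $b_{r,i}(z)$, and to argue by induction on $r$. Put $d_{r,i}(z)=c_{r,i}(z)-b_{r,i}(z)$, with $d_{r,j}=0$ for $j<r+1$ or $j>3r$. This convention is consistent with the conventions already adopted for $b$ and $c$.

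\emph{Base case $r=1$.} From \eqref{recu}, $\Jc(n,n-1;z)=\sum_{j=1}^{n-1}j(j+z)$. This equals $\JS(n,n-1;z)$ by \eqref{eq:firstdiagonal}. Since $\binom n2,\binom n3$ form a basis of the relevant polynomials, this gives $c_{1,i}=b_{1,i}$. Hence $d_{1,2}=d_{1,3}=0$.

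\emph{Inductive step.} Subtracting \eqref{eq:brec} from \eqref{eq:brec02} gives, for $r+2\le i\le 3r+3$,
\[
\begin{aligned}
d_{r+1,i}(z)={}&(i-1)(i-1+z)d_{r,i-1}(z)+(i-1)(2i-3+z)d_{r,i-2}(z)+(i-1)(i-2)d_{r,i-3}(z)\\
&+\bigl[(i-1)(i-1+z)-(i-r-1)(i-r-1+z)\bigr]b_{r,i-1}(z)+2r(i-1)\,b_{r,i-2}(z).
\end{aligned}
\]
The first bracket factors as a difference of the form $u(u+z)-v(v+z)=(u-v)(u+v+z)$. With $u=i-1$ and $v=i-r-1$ it becomes
\[
r\,(2i-r-2+z)=r\bigl((z+1)+2i-r-3\bigr).
\]
Because $i\ge r+2$, the constant $2i-r-3\ge r+1$ is positive.

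It remains to check the other multipliers as polynomials in $z+1$. We have $(i-1)(i-1+z)=(i-1)\bigl((z+1)+i-2\bigr)$ and $(i-1)(2i-3+z)=(i-1)\bigl((z+1)+2i-4\bigr)$. Also $(i-1)(i-2)$ and $2r(i-1)$ are nonnegative integers, since $i\ge3$.

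So every multiplier is a polynomial in $z+1$ with nonnegative integer coefficients. By Theorem~\ref{thm:main01}, the $b_{r,\cdot}$ have this property. By the induction hypothesis, so do the $d_{r,\cdot}$. Hence $d_{r+1,i}$ has it as well.

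\emph{Expected obstacle.} The only delicate point is the bookkeeping of the index ranges. One must verify that the vanishing conventions for $b$, $c$ and $d$ outside $[r+1,3r]$ are consistent with how \eqref{eq:brec} and \eqref{eq:brec02} were derived. In particular, the recurrence for $c$ was obtained by comparing coefficients in a basis, so it holds for all $i$ in range with these zero conventions. One must also check that the positivity of $2i-r-3$ is used only where $b_{r,i-1}\neq0$, which is automatic since $i\ge r+2$.
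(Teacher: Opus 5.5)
Your proposal is correct and follows the paper's proof essentially verbatim. Both argue by induction on $r$, subtract \eqref{eq:brec} from \eqref{eq:brec02}, isolate the extra terms $r(2i-r-2+z)b_{r,i-1}+2r(i-1)b_{r,i-2}$, and use $i\ge r+2$ to write every multiplier as a nonnegative integer combination of powers of $z+1$. The only difference is cosmetic: you obtain the base case $c_{1,i}=b_{1,i}$ from $\Jc(n,n-1;z)=\JS(n,n-1;z)$ and the linear independence of $\binom n2,\binom n3$, whereas the paper simply records $c_{1,2}=b_{1,2}=z+1$ and $c_{1,3}=b_{1,3}=2$.
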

\begin{proof}
We prove the comparison by induction on $r$. For $r=1$,
$c_{1,2}=b_{1,2}=z+1$ and $c_{1,3}=b_{1,3}=2$.
Suppose that every $c_{r,j}-b_{r,j}$ has nonnegative
integer coefficients in $z+1$.
In the recurrences~\eqref{eq:brec} and~\eqref{eq:brec02},
the third multipliers are equal. The differences between
the first and second multipliers are given as follows:
\[
\begin{aligned}
&(i-1)(i-1+z)
 -(i-r-1)(i-r-1+z)
   =r(2i-r-2+z),\\
&(i-1)(2i-3+z)
 -(i-1)(2i-2r-3+z)
   =2r(i-1).
\end{aligned}
\]
Hence
\[
\begin{aligned}
c_{r+1,i}-b_{r+1,i}
=&(i-1)(i-1+z)(c_{r,i-1}-b_{r,i-1})+(i-1)(2i-3+z)(c_{r,i-2}-b_{r,i-2})\\
 &+(i-1)(i-2)(c_{r,i-3}-b_{r,i-3})+r(2i-r-2+z)b_{r,i-1}
   +2r(i-1)b_{r,i-2}.
\end{aligned}
\]
When $i\ge r+2$, the identities
\[
\begin{aligned}
i-1+z&=(z+1)+(i-2),\\
2i-3+z&=(z+1)+(2i-4),\\
2i-r-2+z&=(z+1)+(2i-r-3)
\end{aligned}
\]
show that all multipliers have nonnegative
integer coefficients in $z+1$. Each $b_{r,j}$
has nonnegative coefficients in $z+1$ by
Theorem~\ref{thm:main01}, and the three differences have
this property by induction. Thus so does
$c_{r+1,i}-b_{r+1,i}$. Outside the stated range, both
coefficients are zero. This completes the proof.
\end{proof}

\section{Proof of Theorem~\ref{thm:deletions}}\label{sec:04}
%\hspace*{\parindent}
%%%%%%%%%%%%%%%%%%%%%%%%%%%%%%%%%%%%%%%%%%%
Fix a deletion set $S\subseteq [\overline{k}]$ and put $i=|S|$.
Recall that
$$A_{k,S}(x)=\sum_{\pi\in\operatorname{JSP}_{k,S}}
                   x^{\operatorname{des}(\pi)}.$$
We shall show that for $1\le i\le k$ with $i\in\{1,2,3,k-2,k-1\}$, 
every nonzero nonnegative linear combination of the
polynomials $A_{k,S}(x)$ has only simple
negative zeros. 

When $k=1$, the two permutations with no deleted elements are $\overline1\,11$ and $11\,\overline1$, and so $A_{1,\varnothing}=1+x$.
To obtain the permutations at $k=2$ with $\overline2$ deleted,
insert the pair $22$ into one of the four gaps of $\overline1\,11$ or $11\,\overline1$,
including the two ends. The first word contributes $1+3x$;
the second contributes $2x+2x^2$. Thus
$A_{2,\{\overline2\}}(x)=(1+3x)+(2x+2x^2)=1+5x+2x^2$.
 For $k\ge2$, fix
$S\subseteq[\overline{k-1}]$ and put $L=3(k-1)-|S|$.
Every permutation enumerated by $A_{k-1,S}(x)$ has the length $L$. Assume that it has $h$ descents.
Inserting $kk$ at a descent or at the end leaves the number
of descents unchanged. Each of the other $L-h$ insertions
increases it by one.
Its contribution is 
$(h+1)x^h+(L-h)x^{h+1}$, which yields that
\begin{equation}\label{AkS}
A_{k,S\cup\{\overline k\}}(x)
 =(1+Lx)A_{k-1,S}(x)+x(1-x)A'_{k-1,S}(x).
 \end{equation}
If $\overline k$ is retained, first insert $\overline k$ and then
insert $kk$. The first insertion has the same descent count as
above, and the second acts on words of length $L+1$.
In conclusion, we have 
\begin{equation}\label{eq:deletion-insertion}
 A_{k,S\cup\{\overline k\}}=D_LA_{k-1,S},~~
 A_{k,S}=D_{L+1}D_LA_{k-1,S}, 
\end{equation}
where $L=3(k-1)-|S|$ and $D_L$ is defined by 
\begin{equation}\label{DL}
D_Lf(x)=(1+Lx)f(x)+x(1-x)f'(x).
\end{equation}
Throughout the proof, the subscript in $D_L$ is the
actual length of the words before the insertion.
Different insertions may give the same operator $D_L$, which means only that they have the same effect on the
descent polynomial.

Let $\rz$ denote the set of nonzero real polynomials with
only real zeros. For an interval $I$, let $\rz(I)$ consist of those
polynomials in $\rz$ whose zeros all lie in $I$. For nonconstant $f,F\in\rz$, let
$r_1\ge r_2\ge\cdots$ and $s_1\ge s_2\ge\cdots$
be their respective zeros, counted with multiplicity.
We say that $f$ \emph{strictly interleaves} $F$, and
write $f\seps F$, if $\deg f\le\deg F\le\deg f+1$ and
$s_1>r_1>s_2>r_2>\cdots$.
For a nonzero constant $c$ and a linear polynomial $F$,
we set $c\seps F$.
Before showing Theorem~\ref{thm:deletions}, 
we provide two lemmas.
\begin{lemma}\label{lemma-zero}
Let $f$ be a polynomial of degree $d$ with positive leading
coefficient and simple negative zeros. For $L>d$, set
$D_Lf=(1+Lx)f+x(1-x)f'$.
Then $D_Lf$ has degree $d+1$, and $f\seps D_Lf$.
\end{lemma}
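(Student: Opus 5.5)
The plan is the standard sign-alternation (intermediate value) argument. Let the zeros of $f$ be $0>r_1>r_2>\cdots>r_d$, and write $f(x)=c x^d+\cdots$ with $c>0$.

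\textbf{Degree.} The coefficient of $x^{d+1}$ in $D_Lf$ is $Lc+c\cdot(-d)=c(L-d)$. This is positive because $L>d$, so $\deg D_Lf=d+1$ with positive leading coefficient.

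\textbf{Values at the zeros of $f$.} At each zero $r_j$ we have
\[
D_Lf(r_j)=r_j(1-r_j)f'(r_j).
\]
Here $r_j<0$ and $1-r_j>0$, so $r_j(1-r_j)<0$. Since the zeros are simple, the sign of $f'(r_j)$ is that of $(-1)^{j-1}$: $f'(r_1)>0$ because $f>0$ to the right of its largest zero. Hence $\operatorname{sign} D_Lf(r_j)=(-1)^{j}$, so the signs of $D_Lf(r_1),\dots,D_Lf(r_d)$ strictly alternate. By the intermediate value theorem, $D_Lf$ has a zero in each interval $(r_{j+1},r_j)$, giving $d-1$ zeros.

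\textbf{The two remaining zeros.} On the right, $D_Lf(0)=f(0)>0$, because $f$ has positive leading coefficient and all its zeros are negative. Since $D_Lf(r_1)<0$, there is a zero $s_1\in(r_1,0)$. On the left, as $x\to-\infty$ the sign of $D_Lf$ is $(-1)^{d+1}$, while $D_Lf(r_d)$ has sign $(-1)^d$. So there is a zero $s_{d+1}<r_d$. This accounts for $d+1$ distinct real zeros of a polynomial of degree $d+1$. They are therefore all the zeros, they are simple and negative, and they strictly interleave: $s_1>r_1>s_2>\cdots>r_d>s_{d+1}$. Thus $f\seps D_Lf$.

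\textbf{Edge case $d=0$.} Here $f=c>0$ and $D_Lf=c(1+Lx)$, which is linear with zero $-1/L$. This gives $c\seps D_Lf$ by the stated convention.

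\textbf{Main obstacle.} The delicate point is the sign bookkeeping at the two ends: checking $D_Lf(0)=f(0)>0$ against the leading-term sign. Both use only that all zeros of $f$ are negative and that $L>d$.
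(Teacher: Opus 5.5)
Your proposal is correct and matches the paper's proof essentially step for step. Both arguments use the same ingredients: the leading coefficient $(L-d)c>0$, the sign $(-1)^j$ of $D_Lf(r_j)=r_j(1-r_j)f'(r_j)$, and the signs at $0$ and at $-\infty$. These give one zero in each of the $d+1$ disjoint intervals, and the case $d=0$ is treated separately in both.
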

\begin{proof}
If $d=0$, then $D_Lf=f(1+Lx)$, so the claim follows from
$L>0$. Suppose $d\ge1$, and let $r_1>\cdots>r_d$ be the
zeros of $f$. If $a>0$ is the leading coefficient of $f$,
then the leading coefficient of $D_Lf$ is $(L-d)a>0$.
At each $r_j$, $D_Lf(r_j)=r_j(1-r_j)f'(r_j)$.
The factors $r_j(1-r_j)$ are negative, while $f'(r_j)$ has
sign $(-1)^{j-1}$. Thus $D_Lf(r_j)$ has sign $(-1)^j$.
Also $D_Lf(0)=f(0)>0$, and $D_Lf(x)$ has sign
$(-1)^{d+1}$ for sufficiently negative $x$.
Hence $D_Lf$ has a zero in each of
$(r_1,0),\quad (r_2,r_1),\quad\ldots,\quad
(r_d,r_{d-1}),\quad (-\infty,r_d)$.
These are $d+1$ distinct zeros, and so $f\seps D_Lf$.
\end{proof}

Every word in $\operatorname{JSP}_{k,S}$ has length $3k-|S|$.
The first occurrence of each unbarred letter is followed
by an equal or larger letter. Thus at least $k$ adjacent
pairs are not descents, giving at most $2k-|S|-1$ descents.
The nonincreasing word attains this bound, while the
nondecreasing word is the unique word with no descents.
Hence
\begin{equation}\label{degree}
\deg A_{k,S}(x)=2k-|S|-1,\qquad A_{k,S}(0)=1.
\end{equation}
Starting with $A_{1,\varnothing}(x)=1+x$ and
$A_{1,\{\overline1\}}(x)=1$, using~\eqref{AkS},~\eqref{eq:deletion-insertion}
and Lemma~\ref{lemma-zero}, we see that $A_{k,S}(x)\in\rz(-\infty,0)$ for $k\ge1$ and
$S\subseteq[\overline{k}]$, see also~\cite[p.~2000]{GesselLinZeng}.

\begin{lemma}\label{lem:insertion-comparison}
Let $f,g\in\rz(-\infty,0)$ have positive leading coefficients,
with $\deg g=\deg f+1$ and $f\seps g$.
If $L>\deg f$, then $D_Lf\seps D_{L+1}g$.
If $L>\deg g$, then $D_Lf\seps D_Lg$.
\end{lemma}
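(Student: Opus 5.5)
The plan is to reduce both assertions to sign evaluations at the zeros of $F=D_Lf$. Put $d=\deg f$, let $r_1>\cdots>r_d$ be the zeros of $f$ and $s_1>\cdots>s_{d+1}$ those of $g$, so that $0>s_1>r_1>s_2>\cdots>r_d>s_{d+1}$. By Lemma~\ref{lemma-zero}, $F$ has degree $d+1$, positive leading coefficient, and simple zeros $t_1>\cdots>t_{d+1}$ with $t_j\in(r_j,r_{j-1})$, where $r_0=0$ and $r_{d+1}=-\infty$. In particular $f(t_j)\neq0$ and $\operatorname{sign} f(t_j)=(-1)^{j-1}$. The same lemma, applied to $g$, gives that the target $G$ ($D_{L+1}g$ or $D_Lg$) has degree $\deg F+1$ and positive leading coefficient; for $D_Lg$ this is where the hypothesis $L>\deg g$ is used. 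It therefore suffices to show $\operatorname{sign}G(t_j)=(-1)^j$ for all $j$. Then $G$ has a zero in each of $(t_1,\infty)$, $(t_2,t_1)$, $\ldots$, $(-\infty,t_{d+1})$; this uses $G(t_1)<0$ and the positive leading coefficient for the first interval, and a degree-parity count for the last. Since $G$ has exactly $d+2$ zeros, this gives $F\seps G$.

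The main tool is the Wronskian $W=fg'-f'g$. Write the partial fraction decomposition
\[
\frac gf=ax+b+\sum_{j=1}^{d}\frac{c_j}{x-r_j},\qquad c_j=\frac{g(r_j)}{f'(r_j)}.
\]
Interlacing gives $\operatorname{sign} g(r_j)=(-1)^j$ and $\operatorname{sign} f'(r_j)=(-1)^{j-1}$, so every $c_j<0$, and $a>0$. Hence $(g/f)'=a-\sum_j c_j/(x-r_j)^2>0$, that is, $W>0$ off the zeros of $f$.

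At a zero $t$ of $F$ we have $(1+Lt)f(t)=-t(1-t)f'(t)$, and substituting this gives
\[
D_Lg(t)=\frac{t(1-t)\,W(t)}{f(t)}.
\]
Since $t_j<0$, the factor $t_j(1-t_j)$ is negative. Therefore $\operatorname{sign} D_Lg(t_j)=-\operatorname{sign} f(t_j)=(-1)^j$, which proves the second assertion $D_Lf\seps D_Lg$.

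For the first assertion write $D_{L+1}g=D_Lg+xg$, so that
\[
f(t_j)\,D_{L+1}g(t_j)=t_j\bigl(f(t_j)g(t_j)+(1-t_j)W(t_j)\bigr).
\]
Dividing by $f^2$, the required sign reduces to the inequality
\[
\varphi(t_j):=\frac gf(t_j)+(1-t_j)\Bigl(\frac gf\Bigr)'(t_j)>0.
\]
By the partial fraction formula,
\[
\varphi(t)=a+b+\sum_j c_j\,\frac{2t-r_j-1}{(t-r_j)^2}.
\]
Every summand with $2t-r_j-1<0$ is positive, since $c_j<0$. This holds for all $j$ with $r_j>2t-1$, and in particular for all $j$ with $r_j\ge t$, because $t<0$.

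This step is the main obstacle, since the sign is not automatic. First I would try to exploit the freedom $G\mapsto G+\lambda F$, which does not change the values $G(t_j)$, together with the relation $F(t_j)=0$, to rewrite $\varphi(t_j)$ as a manifestly positive combination. If that fails, the fallback is the convex-cone route: the set of $G$ with positive leading coefficient and $F\seps G$ is convex, and I would show that $xg$ also alternates in sign at the $t_j$ modulo $F$. This amounts to controlling on which side of $s_j$ each $t_j$ lies, using $L>\deg f$ and a comparison of $F$ with $D_Lg$ at the points $s_j$.
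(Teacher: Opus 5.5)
\textbf{There is a genuine gap.} Your proof of the second assertion is complete and correct, and it is the dual of the paper's route. You evaluate $D_Lg$ at the zeros $t_j$ of $D_Lf$ using $f(t)\,D_Lg(t)=t(1-t)W(t)$ with $W=fg'-f'g>0$. The paper instead evaluates $D_Lf$ at the zeros of the target polynomial.

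The first assertion, $D_Lf\seps D_{L+1}g$, is left unproved. Everything reduces to $\Psi(t_j)>0$, where $\Psi=fg+(1-x)W=f^2\varphi$. Your expansion of $g/f$ over the zeros of $f$ does not decide this sign: the terms with $r_j<2t-1$ are negative and must be weighed against $a+b$. Neither fallback closes the gap.
\begin{itemize}
\item The first is unspecified. Adding multiples of $F$ leaves every value $G(t_j)$ unchanged, so it helps only if you already know which rewriting to make.
\item The second is false: $xg$ need not alternate in sign at the $t_j$. Take $f=1$, $g=x+\sigma$ with $0<\sigma<1/L$. Then $t_1=-1/L<-\sigma=s_1$, so $t_1g(t_1)>0$, although the required sign is negative. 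Only the sum $D_Lg+xg$, whose value at $t_1$ is $-(1+\sigma)/L$, has the right sign. So you cannot control which side of $s_j$ each $t_j$ lies on.
\end{itemize}

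The missing idea is to expand $f/g$ over the zeros $s_i$ of $g$, rather than $g/f$ over the zeros of $f$. Write $f/g=\sum_i e_i/(x-s_i)$ with $e_i=f(s_i)/g'(s_i)$. Here $e_i>0$, because by interlacing $f(s_i)$ and $g'(s_i)$ both have sign $(-1)^{i-1}$. A direct computation gives $\Psi=-g^2\bigl((1-x)f/g\bigr)'$, which is the polynomial identity
\[
\Psi(x)=\sum_{i=1}^{d+1}e_i(1-s_i)\Bigl(\frac{g(x)}{x-s_i}\Bigr)^2 .
\]
Each coefficient $e_i(1-s_i)$ is positive, and the polynomials $g(x)/(x-s_i)$ have no common zero. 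Hence $\Psi>0$ on all of $\mathbb{R}$, so the sign you were worried about is automatic.

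It follows that $f(t_j)\,D_{L+1}g(t_j)=t_j\Psi(t_j)<0$, so $\operatorname{sign}D_{L+1}g(t_j)=(-1)^j$, and your counting argument finishes the proof. This is exactly the negativity of $\bigl((1-x)f/g\bigr)'$ on which the paper's proof rests. The paper applies it from the other side: it evaluates $D_Lf$ at the zeros of $D_{L+1}g$ via the identity $D_Lf=\frac fg\,D_{L+1}g+xg\bigl((1-x)\frac fg\bigr)'$.
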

\begin{proof}
Put $d=\deg f$, and let $r_1>\cdots>r_{d+1}$ be the
zeros of $g$. We first prove $D_Lf\seps D_{L+1}g$
for $L>d$.
By Lemma~\ref{lemma-zero}, $D_{L+1}g$ has degree $d+2$
and $g\seps D_{L+1}g$. Moreover,
\[
 (D_{L+1}g)(r_1)=r_1(1-r_1)g'(r_1)<0,
 \qquad
 (D_{L+1}g)(0)=g(0)>0.
\]
Thus $D_{L+1}g$ has a zero in $(r_1,0)$.
Writing its zeros as $u_1>\cdots>u_{d+2}$, we obtain
$$0>u_1>r_1>u_2>\cdots>r_{d+1}>u_{d+2}.$$
We shall determine the sign of $D_Lf$ at these points.
Since $f\seps g$, both $f(r_i)$ and $g'(r_i)$ have
sign $(-1)^{i-1}$. Hence the partial fraction expansion is
\[
 \frac{f(x)}{g(x)}
 =\sum_{i=1}^{d+1}\frac{c_i}{x-r_i},
 \qquad
 c_i=\frac{f(r_i)}{g'(r_i)}>0.
\]
Differentiating gives
\begin{equation}\label{Dif}
 \left((1-x)\frac fg\right)'
 =-\sum_{i=1}^{d+1}
   \frac{c_i(1-r_i)}{(x-r_i)^2}<0
\end{equation}
for real $x$ away from the zeros of $g$.
By the product rule, $f'=\left(g\frac{f}{g}\right)'=g\left(\frac{f}{g}\right)'+\frac{f}{g}g'$.
Then we have 
\begin{align*}
D_{L}(f)&=(1+Lx)f+x(1-x)f'\\
&=(1+Lx)f+x(1-x)\left(g\left(\frac{f}{g}\right)'+\frac{f}{g}g'\right)\\
&=\frac{f}{g}\left((1+Lx)g+x(1-x)g'\right)+x(1-x)g\left(\frac{f}{g}\right)'\\
&=\frac{f}{g}D_L(g)+x(1-x)g\left(\frac{f}{g}\right)'.
\end{align*}

Since $D_{L}(g)=D_{L+1}(g)-xg$, we obtain
\begin{align*}
D_{L}(f)&=\frac{f}{g}D_{L+1}(g)-xf+x(1-x)g\left(\frac{f}{g}\right)'\\
&=\frac{f}{g}D_{L+1}(g)+xg\left(-\frac{f}{g}+(1-x)\left(\frac{f}{g}\right)'\right)\\
&=\frac fg\,D_{L+1}g
   +xg\left((1-x)\frac fg\right)'.
\end{align*}
Since $g(u_i)\ne0$, we may evaluate this identity at $u_i$:
\[
 (D_Lf)(u_i)
 =u_i g(u_i)\left((1-x)\frac fg\right)'\!(u_i).
\]
It follows from~\eqref{Dif} that $(D_Lf)(u_i)$ has the same sign as $g(u_i)$,
namely $(-1)^{i-1}$.
These signs alternate, so $D_Lf$ has a zero in each
$(u_{i+1},u_i)$, for $1\le i\le d+1$.
By Lemma~\ref{lemma-zero}, its degree is $d+1$.
The zeros just found are therefore all its zeros, and
each is simple. This proves $D_Lf\seps D_{L+1}g$.

Now suppose $L>\deg g$. By Lemma~\ref{lemma-zero},
$g\seps D_Lg$. Thus every zero $u$ of $D_Lg$ is
negative, and $g(u)\ne0$. Since $f=(f/g)g$, the product
rule gives
\[
\begin{aligned}
D_Lf
&=\frac fg\,D_Lg
  +x(1-x)g\left(\frac fg\right)'.
\end{aligned}
\]
At $x=u$, the term containing $D_Lg$ vanishes. Hence
\[
(D_Lf)(u)
 =u(1-u)g(u)\left(\frac fg\right)'(u).
\]
Differentiating the partial fraction expansion gives
\[
\left(\frac fg\right)'(x)
 =-\sum_{i=1}^{d+1}\frac{c_i}{(x-r_i)^2}<0
 \qquad (g(x)\ne0).
\]
Since $u(1-u)<0$, the values $(D_Lf)(u)$ and $g(u)$
have the same nonzero sign.
There is exactly one simple zero of $g$ between consecutive
zeros of $D_Lg$. Thus $D_Lf$ takes opposite signs at the
endpoints of each of the $d+1$ intervening intervals.
The intermediate value theorem gives a zero in each interval.
Since $\deg D_Lf=d+1$, these are all its zeros, and each
is simple and negative. Hence $D_Lf\seps D_Lg$.
The argument also applies when $f$ is constant.
\end{proof}

For $j\ge2$, let $H_j(x)$ be the descent polynomial of
the intermediate words obtained from
$\operatorname{JSP}_{j-1,\varnothing}$ by inserting
$\overline j$, but before inserting the pair $jj$.
We shall use $H_j$ only for this intermediate family.
Thus its underlying words always have length $3j-2$. Hence
\begin{equation}\label{eq:Hj}
 H_j=D_{3j-3}A_{j-1,0},
 \qquad
 A_{j,0}=D_{3j-2}H_j.
\end{equation}

For clarity, we divide the proof of Theorem~\ref{thm:deletions} into five subsections.
\subsection{The real-rootedness of $A_{k,1}(x)$}
\hspace*{\parindent}

For $1\le j\le k$, we shall prove that 
\begin{equation}\label{eq:one-deletion}
 A_{k,\{\overline j\}}\seps A_{k,0}.
\end{equation}

Fix $j\in[k]$. We first establish the comparison when the
letter $\overline j$ is introduced.
If $j=1$, then $A_{1,\{\overline1\}}=1$ and $A_{1,0}=1+x$, and
so the required relation holds.
When $j\ge2$, both constructions start with a full
Jacobi--Stirling permutation of order $j-1$, of length
$3j-3$. If $\overline j$ is deleted, we insert only the
pair $jj$, and hence $A_{j,\{\overline j\}}
   =D_{3j-3}A_{j-1,0}$.
If $\overline j$ is retained, inserting it gives
$H_j=D_{3j-3}A_{j-1,0}$,
and the subsequent insertion of $jj$ gives
$A_{j,0}=D_{3j-2}H_j$.
Thus $A_{j,\{\overline j\}}=H_j$
as polynomials. We do not identify the underlying word
families: the first consists of words of length $3j-1$,
whereas the intermediate words defining $H_j$ have length
$3j-2$. By Lemma~\ref{lemma-zero}, we get 
$H_j\seps A_{j,0}$.
Hence $A_{j,\{\overline j\}}\seps A_{j,0}$.
We now carry this comparison from order $j$ to order $k$.
Suppose $j\le s<k$ and $A_{s,\{\overline j\}}\seps A_{s,0}$.
The corresponding words have lengths $3s-1$ and $3s$.
At order $s+1$, both constructions retain
$\overline{s+1}$.
Inserting $\overline{s+1}$ gives $D_{3s-1}A_{s,\{\overline j\}}$ and
$D_{3s}A_{s,0}$.
By Lemma~\ref{lem:insertion-comparison},
\[
 D_{3s-1}A_{s,\{\overline j\}}
 \seps
 D_{3s}A_{s,0}.
\]
The new word lengths are $3s$ and $3s+1$.
Inserting the pair $(s+1)(s+1)$ and applying the same
lemma once more gives $$D_{3s}D_{3s-1}A_{s,\{\overline j\}}
 \seps
 D_{3s+1}D_{3s}A_{s,0},$$
that is, $A_{s+1,\{\overline j\}}\seps A_{s+1,0}$.
Repeating this step for $s=j,j+1,\ldots,k-1$ proves
\eqref{eq:one-deletion}.

For $k=1$, we have $A_{1,1}=1$. Assume $k\ge2$, and let
$\alpha_1<\alpha_2<\cdots<\alpha_{2k-1}$
be the zeros of $A_{k,0}$. Since the leading coefficient of $A_{k,\{\overline j\}}$ is positive,
$\operatorname{sgn}
 A_{k,\{\overline j\}}(\alpha_r)
 =(-1)^{2k-1-r}$.
The polynomial
$A_{k,\{\overline j\}}$ has one zero in each interval
$(\alpha_r,\alpha_{r+1})$. Therefore, the polynomial
$A_{k,1}(x)
 =\sum_{j=1}^kA_{k,\{\overline j\}}(x)$
has opposite signs at consecutive $\alpha_r$, and
it has a zero in each of the $2k-2$ intervals
$(\alpha_r,\alpha_{r+1})$, where $1\le r\le2k-2$.
Since $\deg A_{k,1}=2k-2$, these are all its zeros.
Thus $A_{k,1}(x)$ has only simple negative zeros.

\subsection{The real-rootedness of $A_{k,2}(x)$}
\hspace*{\parindent}

From~\eqref{eq:Hj}, we see that $H_k=D_{3k-3}A_{k-1,0}$.
We prove by induction on \(k\) that 
\begin{equation}\label{eq:two-deletion}
 A_{k,S}\seps H_k,
\end{equation}
where $S\subseteq[\overline k]$ and $|S|=2$.
For $k=2$, the only deletion set is
$S=\{\overline1,\overline2\}$.
The words counted by $A_{1,\{\overline1\}}=1$ have length
$2$. By~\eqref{eq:one-deletion}, we see that
$$A_{1,\{\overline1\}}=1
 \seps 1+x=A_{1,0}.$$
The first assertion of
Lemma~\ref{lem:insertion-comparison}, with $L=2$, gives
$$A_{2,S}=D_2(1)=1+2x
 \seps D_3(1+x)=1+5x+2x^2=H_2.$$

Let $k\ge3$, and assume~\eqref{eq:two-deletion} at order
$k-1$. Write $S=\{\overline a,\overline b\}$, where $a<b$.
First suppose that $b=k$. Put $T=\{\overline a\}$.
The words counted by $A_{k-1,T}$ have length $3k-4$.
Since $\overline k$ is deleted, inserting the pair $kk$
gives $A_{k,S}=D_{3k-4}A_{k-1,T}$.
The full words of order $k-1$ have length $3k-3$.
Inserting $\overline k$ gives $H_k=D_{3k-3}A_{k-1,0}$.
It follows from~\eqref{eq:one-deletion} that $A_{k-1,T}\seps A_{k-1,0}$.
Lemma~\ref{lem:insertion-comparison} therefore gives~\eqref{eq:two-deletion}.

Now suppose that $b<k$. Then
$S\subseteq[\overline{k-1}]$, and the induction hypothesis
gives $$A_{k-1,S}\seps H_{k-1}.$$
Both underlying word families have length $3k-5$.
On the left side, insert $\overline k$ and then $kk$:
\[
 A_{k-1,S}
 \xrightarrow[\overline k]{D_{3k-5}}
 D_{3k-5}A_{k-1,S}
 \xrightarrow[kk]{D_{3k-4}}
 A_{k,S}.
\]
On the right side, $H_{k-1}$ is the intermediate family
in the full construction of order $k-1$. Insert
$(k-1)(k-1)$ to complete that construction, and then
insert $\overline k$:
\[
 H_{k-1}
 \xrightarrow[(k-1)(k-1)]{D_{3k-5}}
 A_{k-1,0}
 \xrightarrow[\overline k]{D_{3k-3}}
 H_k.
\]
The two insertions are different, but both start with
words of length $3k-5$. The subscript of $D_L$ records
this length, not the letter or pair being inserted.
Hence the second assertion of
Lemma~\ref{lem:insertion-comparison} gives
$D_{3k-5}A_{k-1,S}\seps A_{k-1,0}$.
The two resulting word lengths are $3k-4$ and $3k-3$.
Applying the first assertion of the lemma to the next two
insertions gives $A_{k,S}\seps H_k$.
This proves~\eqref{eq:two-deletion}.

Let $\beta_1<\beta_2<\cdots<\beta_{2k-2}$
be the zeros of $H_k$.
For every $S$ with $|S|=2$, the polynomial $A_{k,S}$
has one zero in each interval
$(\beta_r,\beta_{r+1})$. Hence it has exactly
$2k-2-r$ zeros to the right of $\beta_r$, and
$\operatorname{sgn}A_{k,S}(\beta_r)
   =(-1)^{2k-2-r}$, which is
independent of $S$.
Therefore, the polynomial $A_{k,2}(x)=\sum_{|S|=2}A_{k,S}(x)$
has opposite signs at consecutive $\beta_r$.
It has a zero in each of the $2k-3$ intervals
$(\beta_r,\beta_{r+1})$, where $1\le r\le2k-3$.
Since $\deg A_{k,2}=2k-3$, these are all its zeros.
Thus $A_{k,2}(x)$ has only simple negative zeros.

\subsection{The real-rootedness of $A_{k,3}(x)$}
\hspace*{\parindent}

Let $S\subseteq[\overline k]$ and $|S|=3$.
We prove by induction on $k$ that
\begin{equation}\label{eq:three-deletion}
 A_{k,S}\seps A_{k-1,0}.
\end{equation}

For $k=3$, the only deletion set is $S=\{\overline1,\overline2,\overline3\}$.
The words counted by
$A_{2,\{\overline1,\overline2\}}$ have length $4$.
Since $\overline3$ is deleted, inserting $33$ gives
$A_{3,S}=D_4A_{2,\{\overline1,\overline2\}}$.
The intermediate words defining $H_2$ also have length
$4$. Inserting $22$ gives $A_{2,0}=D_4H_2$.
From~\eqref{eq:two-deletion}, we see that $A_{2,\{\overline1,\overline2\}}\seps H_2$.
The second assertion of
Lemma~\ref{lem:insertion-comparison} therefore gives
$A_{3,S}\seps A_{2,0}$.

Let $k\ge4$, and assume~\eqref{eq:three-deletion} at
order $k-1$. Write $S=\{\overline a,\overline b,\overline c\}$, where $a<b<c$.
First suppose that $c=k$. Put $T=\{\overline a,\overline b\}$.
The words counted by $A_{k-1,T}$ have length $3k-5$.
Since $\overline k$ is deleted, inserting $kk$ gives
$A_{k,S}=D_{3k-5}A_{k-1,T}$.
The intermediate words defining $H_{k-1}$ also have
length $3k-5$. Inserting $(k-1)(k-1)$ gives $A_{k-1,0}=D_{3k-5}H_{k-1}$.
From~\eqref{eq:two-deletion}, we see that $A_{k-1,T}\seps H_{k-1}$. By
the second assertion of
Lemma~\ref{lem:insertion-comparison}, we arrive at $A_{k,S}\seps A_{k-1,0}$, as desired.

Now suppose that $c<k$. Then
$S\subseteq[\overline{k-1}]$, and the induction hypothesis
gives $A_{k-1,S}\seps A_{k-2,0}$.
Both underlying word families have length $3k-6$.
On the left side, insert $\overline k$ and then $kk$:
\[
 A_{k-1,S}
 \xrightarrow[\overline k]{D_{3k-6}}
 D_{3k-6}A_{k-1,S}
 \xrightarrow[kk]{D_{3k-5}}
 A_{k,S}.
\]
On the right side, insert $\overline{k-1}$ and then
$(k-1)(k-1)$:
\[
 A_{k-2,0}
 \xrightarrow[\overline{k-1}]{D_{3k-6}}
 H_{k-1}
 \xrightarrow[(k-1)(k-1)]{D_{3k-5}}
 A_{k-1,0}.
\]
At the first step both operators act on words of length
$3k-6$. By the second assertion of
Lemma~\ref{lem:insertion-comparison}, we obtain
$D_{3k-6}A_{k-1,S}\seps H_{k-1}$.
Both resulting families have length $3k-5$.
A second application of the same assertion gives
$A_{k,S}\seps A_{k-1,0}$.
This proves~\eqref{eq:three-deletion}.

Let $\gamma_1<\gamma_2<\cdots<\gamma_{2k-3}$
be the zeros of $A_{k-1,0}$.
For every $S$ with $|S|=3$, the polynomial $A_{k,S}$
has one zero in each interval
$(\gamma_r,\gamma_{r+1})$. Hence it has exactly
$2k-3-r$ zeros to the right of $\gamma_r$, and
$\operatorname{sgn}A_{k,S}(\gamma_r)
   =(-1)^{2k-3-r}$, which is
independent of $S$.
Therefore, the polynomial $A_{k,3}(x)=\sum_{|S|=3}A_{k,S}(x)$
has opposite signs at consecutive $\gamma_r$.
It has a zero in each of the $2k-4$ intervals
$(\gamma_r,\gamma_{r+1})$, where $1\le r\le2k-4$.
Since $\deg A_{k,3}=2k-4$, these are all its zeros.
Thus $A_{k,3}(x)$ has only simple negative zeros.

\subsection{Retaining one barred letter and the real-rootedness of $A_{k,k-1}(x)$}
\hspace*{\parindent}

We prove by induction on $k$ that 
\begin{equation}\label{AKK}
A_{k,k}(x)\seps A_{k,S}(x),
\end{equation} where $|S|=k-1$.
For $k=1$, this is $1\seps 1+x$.
Let $k\ge2$, and assume that the assertion holds for $k-1$.
The words counted by $A_{k-1,k-1}$ have length $2k-2$.
Inserting $kk$ gives $A_{k,k}=D_{2k-2}A_{k-1,k-1}$. 

Fix $S$ with $|S|=k-1$. If $\overline k\notin S$, then all barred letters
among $[\overline{k-1}]$ are deleted, and hence
$S=[\overline{k-1}]$. To construct the words
counted by $A_{k,S}$, insert $\overline k$ and then
$kk$. Then $A_{k,S}
 =D_{2k-1}D_{2k-2}A_{k-1,k-1}$. Since
$\deg A_{k,k}=k-1<2k-1$, Lemma~\ref{lemma-zero}
gives $A_{k,k}\seps A_{k,S}$.

If $\overline k\in S$, put
$T=S\setminus\{\overline k\}$. Then $|T|=k-2$,
and the words counted by $A_{k-1,T}$ have length
$2k-1$. Inserting $kk$ gives $A_{k,S}=D_{2k-1}A_{k-1,T}$.
By induction, $A_{k-1,k-1}\seps A_{k-1,T}$.
Lemma~\ref{lem:insertion-comparison}, with
$L=2k-2$, now gives $D_{2k-2}A_{k-1,k-1}
 \seps D_{2k-1}A_{k-1,T}$,
which is $A_{k,k}\seps A_{k,S}$.

Finally, let $r_1>\cdots>r_{k-1}$ be the zeros of
$A_{k,k}$. At $r_j$, every $A_{k,S}$ with
$|S|=k-1$ has sign $(-1)^j$. Any nonzero
nonnegative linear combination has the same signs.
It is positive at $0$ and has sign $(-1)^k$ for
sufficiently negative $x$. It has one zero
in each of the intervals $(r_1,0),~(r_2,r_1),\ldots,~
(-\infty,r_{k-1})$.
Its degree is $k$, so these are all its zeros and
they are simple. In particular,
$A_{k,k}\seps A_{k,k-1}$.

\subsection{Retaining two barred letters and the real-rootedness of $A_{k,k-2}(x)$}
\hspace*{\parindent}

For $k=2$, we have $S=\varnothing$. Starting from
$A_{1,1}=1\seps A_{1,\varnothing}=1+x$, the two parts
of Lemma~\ref{lem:insertion-comparison} give
$A_{2,2}=D_2A_{1,1}
 \seps D_3A_{1,\varnothing}$,
and then $A_{3,3}=D_4A_{2,2}
 \seps D_4D_3A_{1,\varnothing}
 =A_{2,\varnothing}$.
This is the base case.
We now prove that $A_{k+1,k+1}\seps A_{k,S}$ by induction on $k\ge2$, where $|S|=k-2$.
Since the words counted by $A_{k,k}$ have length $2k$,
$A_{k+1,k+1}=D_{2k}A_{k,k}$.

If $\overline k\in S$, then $k\ge3$. We put
$T=S\setminus\{\overline k\}$, and so $|T|=k-3$.
The words counted by $A_{k-1,T}$ have length $2k$,
so $A_{k,S}=D_{2k}A_{k-1,T}$.
By induction, $A_{k,k}\seps A_{k-1,T}$.
Lemma~\ref{lem:insertion-comparison}, applied with
$L=2k$, gives $A_{k+1,k+1}\seps A_{k,S}$.

If $\overline k\notin S$, put $T=S$, so $|T|=k-2$.
The words counted by $A_{k-1,T}$ have length $2k-1$.
We insert $\overline k$ and then insert $kk$. Hence
$A_{k,S}=D_{2k}D_{2k-1}A_{k-1,T}$. From~\eqref{AKK}, we see that 
$A_{k-1,k-1}\seps A_{k-1,T}$.
Applying Lemma~\ref{lem:insertion-comparison} first
with $L=2k-2$ gives $A_{k,k}
 =D_{2k-2}A_{k-1,k-1}
 \seps D_{2k-1}A_{k-1,T}$.
Applying its second assertion with $L=2k$ now gives
$A_{k+1,k+1}\seps A_{k,S}$. The degree condition is satisfied since
$\deg(D_{2k-1}A_{k-1,T})=k<2k$.

Let $r_1>\cdots>r_k$ be the zeros of
$A_{k+1,k+1}$. Since
$A_{k+1,k+1}\seps A_{k,S}$ and $A_{k,S}$ has
positive leading coefficient, the value $A_{k,S}(r_j)$ has
sign $(-1)^j$, which is independent of $S$. Every nonzero
nonnegative linear combination has the same sign
at $r_j$. Such a sum is positive at $0$ and has
sign $(-1)^{k+1}$ for sufficiently negative $x$.
It has a zero in each of the intervals $(r_1,0),~ (r_2,r_1),\ldots,
(-\infty,r_k)$.
These $k+1$ distinct negative zeros account for
its degree, so they are all simple. In particular,
$A_{k,k-2}$ has only simple negative zeros.

\begin{corollary}\label{cor:weighted-deletions}
For $1\le i\le k$ with $i\in\{1,2,3,k-2,k-1\}$, let
\[
 F_{k,i}(x)=
 \sum_{\substack{S\subseteq[\overline k]\\ |S|=i}}
 c_SA_{k,S}(x),
\]
where $c_S$ are nonnegative real numbers, not all zero.
Then $F_{k,i}(x)$ has only simple negative zeros.
Moreover, we have
$$F_{k,1}\seps A_{k,0},~F_{k,2}\seps H_k,~F_{k,3}\seps A_{k-1,0},~A_{k,k}\seps F_{k,k-1},~A_{k+1,k+1}\seps F_{k,k-2}.$$
\end{corollary}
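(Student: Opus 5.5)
This is a repackaging of the five subsections; the one genuinely new ingredient is the weighted common-interleaver argument. In each case the subsections already give, for every admissible $S$ with $|S|=i$, a strict interleaving relation against a single reference polynomial $R_{k,i}$ that does not depend on $S$:
\[
A_{k,S}\seps A_{k,0},\quad A_{k,S}\seps H_k,\quad A_{k,S}\seps A_{k-1,0},\quad A_{k,k}\seps A_{k,S},\quad A_{k+1,k+1}\seps A_{k,S},
\]
for $i=1,2,3,k-1,k-2$ respectively. These come from \eqref{eq:one-deletion}, \eqref{eq:two-deletion}, \eqref{eq:three-deletion}, \eqref{AKK} and the final induction of the last subsection. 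The plan is to replace the plain sum $\sum_{|S|=i}A_{k,S}$ in each subsection's concluding sign argument by $F_{k,i}=\sum c_SA_{k,S}$ with $c_S\ge0$ not all zero.

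\textbf{Cases $i=1,2,3$ (reference polynomial of degree one more than $A_{k,S}$).} Let $\alpha_1<\cdots<\alpha_{m}$ be the simple zeros of $R_{k,i}$, with $m=\deg A_{k,S}+1$. Every $A_{k,S}$ has positive leading coefficient by \eqref{degree}. Each $A_{k,S}$ has exactly one zero in every gap $(\alpha_r,\alpha_{r+1})$. Hence $A_{k,S}(\alpha_r)$ is nonzero and has sign $(-1)^{m-r}$, which is independent of $S$.

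It follows that $F_{k,i}(\alpha_r)$ is nonzero with sign $(-1)^{m-r}$, since at least one $c_S>0$ and the remaining terms do not cancel it. The intermediate value theorem then gives a zero of $F_{k,i}$ in each of the $m-1$ gaps. Since every $A_{k,S}$ has degree $2k-i-1=m-1$ with positive leading coefficient, $\deg F_{k,i}=m-1$. So these are all the zeros of $F_{k,i}$: they are simple, negative (because $\alpha_m<0$), and strictly interleaved by the $\alpha_r$. This is exactly $F_{k,i}\seps R_{k,i}$.

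\textbf{Cases $i=k-1,k-2$ (reference polynomial of degree one less).} Let $r_1>\cdots>r_d$ be the zeros of $R=A_{k,k}$ or $R=A_{k+1,k+1}$, so $d=k-1$ or $d=k$. From $R\seps A_{k,S}$ and positivity of leading coefficients, $A_{k,S}(r_j)$ has sign $(-1)^j$ for every admissible $S$, hence so does $F_{k,i}(r_j)$. In addition:
\begin{itemize}
\item $F_{k,i}(0)=\sum c_S>0$, using $A_{k,S}(0)=1$;
\item $F_{k,i}$ has degree $d+1$ with positive leading coefficient, so its sign at $-\infty$ is $(-1)^{d+1}$.
\end{itemize}
This gives a zero in each of $(r_1,0),(r_2,r_1),\ldots,(-\infty,r_d)$, which is $d+1$ distinct negative zeros and therefore all of them. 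Thus $R\seps F_{k,i}$. When $R$ is constant, i.e.\ $k=1$ in the case $i=k-1$, the relation holds by the convention for linear polynomials.

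\textbf{Where care is needed.} The main point is checking that the reference polynomial and the sign pattern really are uniform over all $S$ of the given size; the subsections were organized to guarantee this. Two small degenerate situations should be checked separately:
\begin{itemize}
\item $k=1$ with $i=1$, where $F_{1,1}$ is a positive constant and $c\seps 1+x$ holds by convention;
\item the base ranges of the inductions, e.g.\ $k=2$ for $i=k-2$, where $S=\varnothing$ and the relation $A_{3,3}\seps A_{2,\varnothing}$ is established at the start of the last subsection.
\end{itemize}
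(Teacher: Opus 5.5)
Your proposal is correct and is essentially the paper's own argument written out in more detail. In each case you use a single $S$-independent comparison polynomial from the five subsections, show that every summand has the same nonzero sign at its zeros (adding the signs at $0$ and at $-\infty$ in the two retained-letter cases), apply the intermediate value theorem, and count degrees using that every $A_{k,S}$ with $|S|=i$ has degree $2k-i-1$ and positive leading coefficient. Your degenerate checks for $i=k-1$ with $k=1$ and for $i=k-2$ with $k=2$ concern $i=0$, which lies outside the range $1\le i\le k$, so they are harmless but unnecessary.
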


\begin{proof}
In each of the five cases above, all summands have the
same nonzero sign at every zero of the corresponding
comparison polynomial. In the two retained-letter cases, the signs at $0$ and
at sufficiently negative $x$ also give a zero in each
of the two outer intervals.
The same is therefore true of any
nonzero nonnegative linear combination. The intermediate
value theorem gives one zero in every intervening interval.
Since all summands have the same degree and positive
leading coefficient, these zeros account for the degree.
\end{proof}

\begin{remark}
The next case of real-rootedness may be reduced to a single interlacing problem. For $k\geq 5$, 
it is routine to check that
$A_{k,4}
 =
 D_{3k-6}
 \left(
 D_{3k-7}A_{k-1,4}+A_{k-1,3}
 \right)$.
By~\eqref{eq:three-deletion}, $A_{k-1,3}\seps A_{k-2,0}$.
Thus a sufficient condition for the real-rootedness of $A_{k,4}$ is
$D_{3k-7}A_{k-1,4}\seps A_{k-2,0}$.
We do not establish this additional comparison here.
\end{remark}

Recall that $A_{k,k}(x)$ are the second-order Eulerian polynomials.
We conclude this section with the following result.
\begin{proposition}
For $k\geq 1$, we have $$A_{k,k-1}(x)=kA_{k+1,k+1}(x)-\frac{x}{2}A'_{k+1,k+1}(x).$$
\end{proposition}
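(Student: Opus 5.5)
The plan is to derive a recurrence for $A_{k,k-1}$ from the insertion operators, and then prove the identity by induction on $k$ using a commutation relation between $D_L$ and the operator $T_k=k-\tfrac{x}{2}\tfrac{d}{dx}$. Write $E_k=A_{k,k}$, so that $E_{k+1}=D_{2k}E_k$ (the words counted by $E_k$ have length $2k$).

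\emph{Step 1: a recurrence for $A_{k,k-1}$.} Sum over the deletion sets $S$ with $|S|=k-1$, split as in the subsection on retaining one barred letter. If $\overline k\notin S$, then $S=[\overline{k-1}]$ and
\[
A_{k,S}=D_{2k-1}D_{2k-2}E_{k-1}=D_{2k-1}E_k.
\]
If $\overline k\in S$, then $A_{k,S}=D_{2k-1}A_{k-1,T}$, where $T=S\setminus\{\overline k\}$ ranges over all $(k-2)$-subsets of $[\overline{k-1}]$. Since $D_{2k-1}$ is linear, summing gives, for $k\ge2$,
\[
A_{k,k-1}=D_{2k-1}\bigl(E_k+A_{k-1,k-2}\bigr).
\]

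\emph{Step 2: commutation identity.} I will check directly from~\eqref{DL} that
\[
x\frac{d}{dx}\bigl(D_{L+1}f\bigr)-D_L\bigl(xf'\bigr)=(L+1)xf .
\]
The $f'$ terms cancel: their coefficient is $(1+(L+1)x)+(1-2x)-(1+Lx)-(1-x)=0$. The $f''$ terms are $x^2(1-x)f''$ on both sides and also cancel. Combining this with $D_{L+1}f-D_Lf=xf$ gives
\[
T_kD_{L+1}f-D_LT_kf=kxf-\tfrac{L+1}{2}xf .
\]
This vanishes exactly when $L=2k-1$. Hence
\[
D_{2k-1}T_k=T_kD_{2k}.
\]
This bookkeeping is the only real computation, and I expect it to be the main (though modest) obstacle. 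Its key feature is that the specific constant $k$ in the statement is precisely what makes the commutator vanish.

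\emph{Step 3: induction.} For the base case $k=1$, we have $A_{1,0}=1+x$ and $E_2=1+2x$, and indeed $1\cdot(1+2x)-\tfrac{x}{2}\cdot2=1+x$.

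For the inductive step, let $k\ge2$ and assume $A_{k-1,k-2}=T_{k-1}E_k=(k-1)E_k-\tfrac{x}{2}E_k'$. Then $E_k+A_{k-1,k-2}=kE_k-\tfrac{x}{2}E_k'=T_kE_k$. Step 1 and Step 2 now give
\[
A_{k,k-1}=D_{2k-1}T_kE_k=T_kD_{2k}E_k=T_kE_{k+1}=kA_{k+1,k+1}-\tfrac{x}{2}A_{k+1,k+1}',
\]
which completes the induction.
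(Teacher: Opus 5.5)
Your proof is correct and follows the paper's argument. The paper uses the same recurrence $A_{k,k-1}=D_{2k-1}\bigl(A_{k,k}+A_{k-1,k-2}\bigr)$, the same intertwining identity $D_{2k-1}\bigl(kf-\frac{x}{2}f'\bigr)=kD_{2k}f-\frac{x}{2}(D_{2k}f)'$ and the same induction; the only difference is that the paper checks the identity on monomials $x^r$, while your general commutator computation also shows why the constant $k$ is forced.
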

\begin{proof}
For $k=1$, the identity follows from
$A_{1,0}=1+x$ and $A_{2,2}=1+2x$.
Let \(k\ge2\), and assume the identity holds at order \(k-1\).
Every word counted by $A_{k-1,k-1}$ has length $2k-2$.
Inserting the pair $kk$ gives precisely the words counted
by $A_{k,k}$. Hence $A_{k,k}=D_{2k-2}A_{k-1,k-1}$.
A word counted by $A_{k,k-1}$ has exactly one barred
letter. 
We distinguish whether this letter is $\overline{k}$, and 
the insertion operators give the polynomial
identities
$$A_{k,k-1}=D_{2k-1}A_{k-1,k-2}
  +D_{2k-1}D_{2k-2}A_{k-1,k-1}=D_{2k-1}\bigl(A_{k-1,k-2}+A_{k,k}\bigr).$$

By the induction hypothesis, we have
$2\bigl(A_{k-1,k-2}+A_{k,k}\bigr)
 =2kA_{k,k}-xA'_{k,k}$.
For every polynomial $f$, note that
$D_{2k-1}(2kf-xf')
 =2kD_{2k}f-x(D_{2k}f)'$.
By linearity, it suffices to check this for $f=x^r$.
In that case, both sides equal
$(2k-r)\bigl((r+1)x^r+(2k-r-1)x^{r+1}\bigr)$.
Applying this identity with $f=A_{k,k}$, we obtain
\[
\begin{aligned}
 2A_{k,k-1}
 &=D_{2k-1}\bigl(2kA_{k,k}-xA'_{k,k}\bigr)\\
 &=2kD_{2k}A_{k,k}-x(D_{2k}A_{k,k})'\\
 &=2kA_{k+1,k+1}-xA'_{k+1,k+1},
\end{aligned}
\]
as desired. This completes the induction.
\end{proof}

\section{Top-heaviness of the Jacobi-Stirling descent polynomials}\label{section5}

We first record a basic consequence of the insertion
operator $D_L$ defined by~\eqref{DL}.
\begin{lemma}\label{lem:top-heavy-operator}
Let $f(x)$ be a polynomial of degree $d\ge1$ with positive coefficients, and let $L\ge d+1$ be an integer.
If $f(x)$ is strictly top heavy and has an increasing left half, then $D_Lf$ has the same two properties.
\end{lemma}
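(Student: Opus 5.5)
Write $f(x)=\sum_{j=0}^{d}a_jx^j$ and $g=D_Lf=\sum_{j=0}^{d+1}b_jx^j$. The plan is to compute the coefficients of $g$ explicitly and then check the two properties by direct comparison. From $D_Lf=(1+Lx)f+x(1-x)f'$, the coefficient of $x^j$ is
\[
b_j=(j+1)a_j+(L-j+1)a_{j-1},\qquad 0\le j\le d+1,
\]
with $a_{-1}=a_{d+1}=0$. This is exactly the insertion count $(h+1)x^h+(L-h)x^{h+1}$ from the derivation of~\eqref{AkS}. Since $L\ge d+1$, every multiplier $L-j+1$ with $j\le d+1$ is positive. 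So all $b_j>0$ and $\deg g=d+1$. Put $e=d+1$.

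\textbf{Increasing left half.} We need $b_j<b_{j+1}$ for $0\le j<\lfloor e/2\rfloor$. We have
\[
b_{j+1}-b_j=(j+2)a_{j+1}-(j+1)a_j+(L-j)a_j-(L-j+1)a_{j-1}.
\]
Regroup this as $(j+1)(a_{j+1}-a_j)+a_{j+1}+(L-j)(a_j-a_{j-1})-a_{j-1}$. For $j+1\le\lfloor d/2\rfloor$, the hypothesis gives $a_{j+1}>a_j>a_{j-1}$, so $a_{j+1}-a_{j-1}>0$ and both differences are positive, hence $b_{j+1}>b_j$.

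The boundary index $j+1=\lfloor e/2\rfloor>\lfloor d/2\rfloor$ occurs only when $d$ is odd and $j+1=(d+1)/2$. There we cannot use $a_{j+1}>a_j$ directly. Instead we use top-heaviness in the form $a_{(d+1)/2}>a_{(d-1)/2}$, which is $a_{d-j}>a_j$ with $j=(d-1)/2$. This gives the same positivity.

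\textbf{Strict top-heaviness.} We need $b_j<b_{e-j}$ for $j<e/2$. Write
\[
b_{e-j}=(d+2-j)a_{d+1-j}+(L-d+j)a_{d-j}.
\]
Compare it termwise with $b_j=(j+1)a_j+(L-j+1)a_{j-1}$, pairing $a_{d-j}$ with $a_j$ and $a_{d+1-j}$ with $a_{j-1}$. Top-heaviness gives $a_{d-j}\ge a_j$ and $a_{d+1-j}\ge a_{j-1}$, strictly in the relevant ranges.

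The difficulty is that the multipliers do not match: $L-d+j$ may be smaller than $j+1$, and $d+2-j$ smaller than $L-j+1$. Their sums agree, however: both pairs total $L+2$. So write
\[
b_{e-j}-b_j=(j+1)(a_{d-j}-a_j)+(L-j+1)(a_{d+1-j}-a_{j-1})+(L-d-1)(a_{d-j}-a_{d+1-j}).
\]
Up to sign and the rearrangement of the last correction term, this is the identity we will verify. Since $L\ge d+1$, the coefficient $L-d-1$ is nonnegative.

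\textbf{Main obstacle.} The hard step is controlling the sign of the last correction term, $a_{d-j}-a_{d+1-j}$, which compares two right-half coefficients. The hypotheses give no monotonicity on the right half. To handle it I would instead regroup as
\[
(L-d-1)(a_{d-j}-a_{j-1})+(\ldots),
\]
using $a_{d-j}\ge a_j>a_{j-1}$. That is, top-heaviness combined with the increasing left half absorbs the excess multiplier. In each regrouping I would check that every bracket is a difference (right coefficient) minus (left coefficient) with the left index at most the right's mirror. The small cases $j=0$ and $j$ near $e/2$ will be treated separately, since there $a_{-1}=0$ or the middle indices coincide.
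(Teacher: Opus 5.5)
Your proposal is correct and is essentially the paper's proof. It uses the same coefficient formula and the same three-term positivity argument for the increasing left half, with top-heaviness supplying $a_{(d+1)/2}>a_{(d-1)/2}$ at the middle when $d$ is odd. For top-heaviness it uses the same device of letting the increasing left half absorb the surplus multiplier $L-d-1$.

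Written out, your regrouping is
\[
b_{d+1-j}-b_j=(j+1)(a_{d-j}-a_j)+(d+2-j)(a_{d+1-j}-a_{j-1})+(L-d-1)(a_{d-j}-a_{j-1}).
\]
Splitting $a_{d-j}-a_{j-1}=(a_{d-j}-a_j)+(a_j-a_{j-1})$ turns this into the paper's identity, and it is strictly positive for every $0\le j<(d+1)/2$. One small correction: $L-d+j\ge j+1$ always holds, so the only shortfall is $d+2-j\le L-j+1$.
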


\begin{proof}
Let $f(x)=\sum_{i=0}^{d}a_ix^i$, and let $D_Lf(x)=\sum_{i=0}^{d+1}b_ix^i$.
Then 
\begin{equation}\label{eq:left-increasing}
 a_0< a_1<\cdots< a_{\lfloor d/2\rfloor},
\end{equation}
\begin{equation}\label{eq:top-heavy}
 a_i< a_{d-i}
 \qquad (0\le i<{d/2}).
\end{equation}
Using
$D_Lf=(1+Lx)f+x(1-x)f'$,
we obtain
\begin{equation}\label{eq:DL-coefficients}
 b_i=(i+1)a_i+(L-i+1)a_{i-1},
\end{equation}
where we set $a_{-1}=a_{d+1}=0$. 
Note that
\begin{equation*}
b_{i+1}-b_i=(i+2)(a_{i+1}-a_i)
   +(L-i+1)(a_i-a_{i-1}).
\end{equation*}
For $i< \lrf{d/2}$, by~\eqref{eq:left-increasing}, both terms are positive. 
There is one more comparison at the middle. If $d=2m+1$,
then $a_m<a_{m+1}$ by~\eqref{eq:top-heavy}, while
$a_{m-1}< a_m$ by~\eqref{eq:left-increasing}.
Thus $b_m< b_{m+1}$. If $d=2m$, rearranging the same
formula gives
\[
b_{m+1}-b_m
 =(m+2)(a_{m+1}-a_{m-1})
 +(L-2m-1)(a_m-a_{m-1})>0.
\]
Here $a_{m-1}< a_{m+1}$ follows from top-heaviness,
and $a_{m-1}< a_m$ follows from the increasing
left half.
Thus $D_Lf$ has an increasing left half.

We next prove top-heaviness. For $i=0$, we have $b_{d+1}-b_0=(L-d)a_d-a_0>0$,
since $L-d\ge1$ and $a_0< a_d$.
Now let $1\le i<{(d+1)/2}$. Using~\eqref{eq:DL-coefficients}, we get
\begin{equation*}
\begin{aligned}
 b_{d+1-i}-b_i&=(d+2-i)(a_{d+1-i}-a_{i-1})+(L-d+i)(a_{d-i}-a_i)\\
 &+(L-d-1)(a_i-a_{i-1}).
\end{aligned}
\end{equation*}
By the top-heaviness of $f(x)$, we have $a_{i-1}<a_{d-(i-1)}$, since $i-1<{d/2}$.
Thus the first difference is positive.
For the second difference, if $i<{d/2}$, then
$a_i< a_{d-i}$. The only remaining possibility is
that $d$ is even and $i=d/2$. In this case, $a_{d-i}-a_i=0$.
Finally, we have $a_i-a_{i-1}>0$, since
$i\le\lfloor d/2\rfloor$ and the left half is increasing.
Note that $d+2-i>0$, $L-d+i>0$ and $L-d-1\geq 0$. Therefore, $b_i< b_{d+1-i}$.
This completes the proof.
\end{proof}

\begin{theorem}\label{thm:top-heavy}
Let $k\ge2$ and $S\subseteq[\overline k]$. Set $d=2k-|S|-1$. Write
$$A_{k,S}(x)=\sum_{r=0}^{d}a_r(S)x^r.$$
Then $A_{k,S}(x)$ is strictly top heavy and has an increasing
left half. Consequently, 
the polynomial $A_{k,i}(x)$ has the same two properties for every $0\le i\le k$.
\end{theorem}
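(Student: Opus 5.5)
The plan is to induct on $k$, using the insertion relations \eqref{AkS} and \eqref{eq:deletion-insertion} together with Lemma~\ref{lem:top-heavy-operator}. The statement for $A_{k,i}$ will then follow by summing. We must start at $k=2$ rather than $k=1$, since $A_{1,\varnothing}=1+x$ is not strictly top heavy and $A_{1,\{\overline1\}}=1$ has degree $0$.

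\emph{Base case $k=2$.} There are four deletion sets, and we check each directly.
\begin{itemize}
\item $S=\varnothing$: $A_{2,\varnothing}=A_{2,0}=1+14x+21x^2+4x^3$, with $d=3$. Here $1<4$ and $14<21$, and the left half satisfies $1<14$.
\item $S=\{\overline2\}$: $A_{2,\{\overline2\}}=1+5x+2x^2$, computed before \eqref{AkS}, with $d=2$.
\item $S=\{\overline1\}$: $A_{2,\{\overline1\}}=A_{2,1}-A_{2,\{\overline2\}}=1+7x+4x^2$.
\item $S=\{\overline1,\overline2\}$: $A_{2,\{\overline1,\overline2\}}=1+2x$, with $d=1$.
\end{itemize}
Each is strictly top heavy with an increasing left half: $1<2$ and $1<5$; $1<4$ and $1<7$; and $1<2$.

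\emph{Inductive step.} Let $k\ge3$ and $S\subseteq[\overline k]$, and put $S'=S\cap[\overline{k-1}]$ and $L=3(k-1)-|S'|$. By \eqref{eq:deletion-insertion}, either $A_{k,S}=D_LA_{k-1,S'}$ (if $\overline k\in S$) or $A_{k,S}=D_{L+1}D_LA_{k-1,S'}$ (if $\overline k\notin S$). By the induction hypothesis, $f=A_{k-1,S'}$ is strictly top heavy with an increasing left half. Its degree is $d'=2(k-1)-|S'|-1\ge k-2\ge1$ by \eqref{degree}.

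We next check the hypotheses of Lemma~\ref{lem:top-heavy-operator}.
\begin{itemize}
\item \emph{Positive coefficients.} By Section~\ref{sec:04}, $A_{k-1,S'}\in\rz(-\infty,0)$ with positive leading coefficient. It is therefore a positive multiple of $\prod(x+\rho_j)$ with all $\rho_j>0$, so every coefficient of degree $0,\dots,d'$ is positive. The same holds after applying $D_L$, since $D_Lf$ again lies in $\rz(-\infty,0)$ by Lemma~\ref{lemma-zero}.
\item \emph{Size of the subscript.} We have $L-d'=3(k-1)-|S'|-2(k-1)+|S'|+1=k\ge1$, so $L\ge d'+1$. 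For the second application, $\deg D_Lf=d'+1$ and $L+1\ge d'+2$.
\end{itemize}
Applying the lemma once or twice shows that $A_{k,S}$ is strictly top heavy with an increasing left half. The degree is $2k-|S|-1$, as expected.

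\emph{Passing to $A_{k,i}$.} By \eqref{degree}, all $A_{k,S}$ with $|S|=i$ share the degree $d=2k-i-1$. The conditions $a_j<a_{d-j}$ for $0\le j<d/2$ and $a_0<\cdots<a_{\lfloor d/2\rfloor}$ are strict linear inequalities on coefficient vectors of a common length. They are therefore preserved under sums of such vectors. Hence $A_{k,i}=\sum_{|S|=i}A_{k,S}$ inherits both properties, which also gives Theorem~\ref{thm:top-heavy01}.

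The only delicate points are bookkeeping ones: the base case has to be $k=2$, where $\deg\ge1$ holds for every $S$, and the positivity of all coefficients has to be justified. I expect no genuine obstacle beyond verifying these two points.
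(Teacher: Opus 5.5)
Your proposal is correct and follows the paper's proof essentially step for step. It uses the same base case $k=2$ with the four explicit polynomials and the same split on whether $\overline k\in S$, giving $A_{k,S}=D_{3k-|S|-2}A_{k-1,T}$ or $D_{3k-|S|-2}D_{3k-|S|-3}A_{k-1,S}$ with the subscript exceeding the degree by $k$. It then passes to $A_{k,i}$ in the same way, by summing polynomials of equal degree. Your explicit check of the positive-coefficient hypothesis of Lemma~\ref{lem:top-heavy-operator} is a detail the paper leaves implicit; it could equally be read off from $b_i=(i+1)a_i+(L-i+1)a_{i-1}$.
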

\begin{proof}
We prove the strict inequalities for each $A_{k,S}$.
At $k=2$, the four polynomials are
\[
\begin{aligned}
A_{2,\varnothing}&=1+14x+21x^2+4x^3, &
A_{2,\{\overline1\}}&=1+7x+4x^2,\\
A_{2,\{\overline2\}}&=1+5x+2x^2, &
A_{2,\{\overline1,\overline2\}}&=1+2x.
\end{aligned}
\]
Each polynomial is strictly
top heavy and has an increasing left half. Now let $k\ge3$, and assume these inequalities hold
for all polynomials at order $k-1$.

If $\overline k\in S$, put
$T=S\setminus\{\overline k\}$.
We can obtain a word counted by $A_{k,S}$ by inserting
the pair $kk$ into a word counted by $A_{k-1,T}$.
Hence $A_{k,S}
 =D_{3k-|S|-2}A_{k-1,T}$.
The polynomial on the right before insertion has degree
$2k-|S|-2$, so the parameter of $D$ exceeds its degree
by $k$. By induction and using
Lemma~\ref{lem:top-heavy-operator}, we find that $A_{k,S}(x)$ is strictly top heavy and has an increasing
left half.

If $\overline k\notin S$, we insert $\overline k$
and then the pair $kk$. Thus
$A_{k,S}
 =D_{3k-|S|-2}D_{3k-|S|-3}A_{k-1,S}$.
The polynomial $A_{k-1,S}$ has degree
$2k-|S|-3$. The parameter in the first insertion operator
exceeds this degree by $k$. Insertion raises the degree
by one, so the same is true for the second insertion operator.
Applying Lemma~\ref{lem:top-heavy-operator} twice proves
the strict inequalities for $A_{k,S}$.
Recall that $$A_{k,i}(x)
 =\sum_{\substack{S\subseteq[\overline k]\\|S|=i}}
 A_{k,S}(x).$$
Every summand has degree $2k-i-1$. Both coefficient
inequalities are preserved when polynomials of the same
degree are added. Hence they hold for $A_{k,i}(x)$. This completes the proof.
\end{proof}
%%%%%%%%%%%%%%%%%%%%%%%%%%%%%%%%%%%%%%%%%%%%%%%%%
%%%%%%%%%%%%%%%%%%%%%%%%%%%%%%%%%%%%%%%%%%%%%%%%%%%%%%%%
%%%%%%%%%%%%%%%%%%%%%%%%%%%%%%%%%%%%%%%%%%%%
%%%%%%%%%%%%%%%%%%%%%%%%%%%%%%%%%%%%%%%%%%%
%%%%%%%%%%%%%%%%%%%%%%%%%%%%%%%%%%%%%%%%%%%%

\end{document}